\definecolor{uuuuuu}{rgb}{0.27,0.27,0.27}
\definecolor{sqsqsq}{rgb}{0.1255,0.1255,0.1255}
\newtheorem{thm}{Theorem}[section]
\newtheorem{prop}[thm]{Proposition}
\newtheorem{claim}[thm]{Claim}
\newtheorem{lemma}[thm]{Lemma}
\theoremstyle{definition}
\newtheorem{defi}[thm]{Definition}
\begin{document}

\title{\bf\Large Tilings in quasi-random $k$-partite hypergraphs}

\date{}
\author{Shumin Sun\thanks{Shumin Sun is supported by the Warwick Mathematics Institute Centre for Doctoral Training, and gratefully acknowledges funding from the European Research Council (ERC) under the European Union’s Horizon 2020 research and innovation programme (Grant agreement No. 101020255-FDC-ERC-2020-ADG). Email: shumin.sun@warwick.ac.uk}}
\affil{Mathematics Institute,

            University of Warwick,
            
            Coventry, CV4 7AL, UK}
\maketitle
\begin{abstract}
Given $k\ge 2$ and two $k$-graphs ($k$-uniform hypergraphs) $F$ and $H$, an \emph{$F$-factor} in $H$ is a set of vertex-disjoint copies of $F$ that together cover the vertex set of $H$. Lenz and Mubayi 
[\textit{J. Combin. Theory Ser.~B, 2016}]
were first to study the $F$-factor problems in quasi-random $k$-graphs with a minimum degree condition. Recently, Ding, Han, Sun, Wang and Zhou [\textit{J. Combin. Theory Ser.~B, 2023}] gave the density threshold for having all $3$-partite $3$-graphs factors in quasi-random $3$-graphs with vanishing minimum codegree condition $\Omega(n)$. 

In this paper, we consider embedding factors when the host $k$-graph is $k$-partite and quasi-random with partite minimum codegree condition. We prove that if $p>1/2$ and $F$ is a $k$-partite $k$-graph with each part having $m$ vertices, then for $n$ large enough and $m\mid n$, any $p$-dense $k$-partite $k$-graph, with each part having $n$ vertices and partite minimum codegree condition $\Omega(n)$, contains an $F$-factor. We also present a construction showing that $1/2$ is best possible. Furthermore, for $1\leq \ell \leq k-2$, by constructing a sequence of $p$-dense $k$-partite $k$-graphs with partite minimum $\ell$-degree $\Omega(n^{k-\ell})$ having no $K_k(m)$-factor, we show that the partite minimum codegree constraint can not be replaced by other partite  minimum degree conditions. Finally, we prove that $n/2$ is the asymptotic partite minimum codegree threshold for ensuring the existence of  all fixed $k$-partite $k$-graph factors in sufficiently large host $k$-partite $k$-graphs, even in the absence of quasi-randomness.

\end{abstract}
\section{Introduction}

For a positive integer $a$, we denote by $[a]$ the set $\{1,\dots,a\}$. For $k\geq 2$, a \textit{$k$-uniform hypergraph} (in short, \textit{$k$-graph}) $H$ consists of a vertex set $V(H)$ and an edge set $E(H)\subseteq \binom{V(H)}{k}$, that is, every edge is a $k$-element subset of $V(H)$. Let $e(H):=|E(H)|$ be the number of edges in $H$.
For a $k$-graph $H$ and a subset $S\subset \binom{V(H)}{s}$, with $1 \leq s\leq k-1$, let $N_H(S)$ (or $N(S)$) be the set of $(k-s)$-sets $S'\in \binom{V(H)}{k-s}$ such that $S'\cup S\in E(H)$. We call elements of $N_H(S)$ {\emph{neighbors}} of $S$. We define the \emph{degree} of $S$ to be $|N_H(S)|$, denoted by $\deg_H(S)\ (\text{or} \deg(S))$. For a singleton $\{v\}$, we will often write $v$ instead. For a subset $U\subseteq V(H)$, let $H[U]$ be the induced subgraph of $H$ on the vertex set $U$.

A $k$-graph $H$ is \textit{$t$-partite} if there exists a partition of the vertex set $V(H)$ into $t$ parts $V(H)=V_1\cup \cdots \cup V_t$ such that every edge intersects each part in at most one vertex. We say $H$ is \emph{balanced} if $|V_1|=|V_2|=\cdots =|V_t|$. A subset $S\subseteq V(H)$ is said to be \textit{legal} if $|S\cap V_i|\leq 1$ for all $i\in [t]$. In a $k$-partite $k$-graph $H$, we define the \textit{partite minimum $s$-degree} $\delta'_s(H)$ to be the minimum of $\deg_H(S)$ taken over all legal $s$-subsets $S\subseteq V(H)$.
In particular, we call the partite minimum $(k-1)$-degree of $H$ as \textit{partite minimum codegree} of $H$. 

Given two $k$-graphs $F$ and $H$, a \textit{perfect $F$-tiling (or $F$-factor)} in $H$ is a set of vertex-disjoint copies of $F$ that together cover the vertex set of $H$. The study of perfect tilings in graph theory has a long and profound history with a number of results, from the classical results of Corradi--Hajnal~\cite{Corr} and Hajnal--Szemer\'edi~\cite{Hajn} on $K_k$-factors to the famous result of Johansson--Kahn--Vu~\cite{Johansson2008Factors} on perfect tilings in random graphs. One type of perfect tiling problem is under the constraint of the host (hyper)graphs being multipartite. The investigation on this topic has been studied by many researchers~\cite{FI, MarS,LoM,Oleg,AhG,Csa,Kee,KuhD,HanZ,LoK2}.

In this paper, we focus on $F$-factor problem in quasi-random $k$-partite hypergraphs. 
The study of quasi-random graphs was launched in late 1980s by Chung, Graham and Wilson~\cite{Chung-quasi-random}. They proposed several well-defined notions of quasi-random graphs which are equivalent. We note that the $F$-factor issue for quasi-random graphs with positive density and a minimum degree $\Omega(n)$ has been implicitly addressed by Koml\'{o}s--S\'{a}rk\"{o}zy--Szemer\'{e}di~\cite{Koml1997Blow} in the course of developing the famous Blow-up Lemma. Unlike graphs, there are several non-equivalent notions for quasi-random hypergraphs (see~\cite{Reiher_2017}).
One basic notion to define quasi-randomness is uniform edge-distribution which has been studied in~\cite{RRS, LenM}, and this can be applied naturally to multipartite hypergraphs.

\begin{defi}[($p,\mu $){-denseness}] \label{def.}
Given integers $n\geq k\geq2$, let $0<\mu,p <1$, and $H$ be an $n$-vertex $k$-partite $k$-graph with partition $V(H)=V_1\cup \cdots \cup V_k$. We say that $H$ is ($p,\mu $)\emph{-dense} if for all $X_1\subseteq V_1, \dots ,X_k\subseteq V_k$,
\begin{equation}\label{eq:count}
e_H(X_1,\dots,X_k)\geq p|X_1|\cdots|X_k|-\mu n^k,
\end{equation}
where $e_{H}(X_1,\dots,X_k)$ is the number of $(x_1,\dots,x_k)\in X_1\times \cdots \times X_k$ such that $\{x_1,\dots,x_k\}\in E(H)$.
\end{defi}

In particular, we say a $k$-partite $k$-graph $H$ is \emph{$p$-dense} if $H$ is ($p,\mu $)-dense for some small $\mu$.

Lenz and Mubayi~\cite{LenM} were the first to study the $F$-factor problems in quasi-random hypergraphs. They proved that for a linear $k$-graph $F$, every sufficiently large quasi-random $k$-graph with constant density and minimum degree $\Omega(n^{k-1})$ admits an $F$-factor. Ding, Han, Sun, Wang and Zhou~\cite{DHSWZ22} later characterised all $k$-partite guest $k$-graph satisfying this property. Additionally, they~\cite{DHSWZ} established a density threshold of $p=1/8$ for the existence of all $3$-partite $3$-graph factors in quasi-random $3$-graphs under a vanishing minimum codegree condition. In this paper, we explore denseness and partite codegree conditions to ensure $F$-factors in $k$-partite $k$-graphs.

Let $F$ be a $k$-partite $k$-graph with $m$ vertices in each part. We first prove that $p > 1/2$ guarantees an $F$-factor in a $p$-dense $k$-partite $k$-graph $H$ with vanishing partite minimum codegree.

\begin{thm}\label{main}
    Let $k\geq 3$ be an integer. Given $0<\varepsilon, \alpha <1$, and a $k$-partite $k$-graph $F$ with each part having $m$ vertices, there exist an $n_0$ and $\mu >0$ such that the following holds for $n\geq n_0$. If a $(\frac{1}{2}+\varepsilon, \mu)$-dense $k$-partite $k$-graph $H$ with $n$ vertices in each part satisfies that $\delta'_{k-1}(H)\geq \alpha n$ and $n\in m\mathbb{N}$, then $H$ has an $F$-factor.
\end{thm}

Our subsequent construction shows that $1/2$ is the density threshold for containing all balanced $k$-partite $k$-graphs in a $p$-dense $k$-partite $k$-graphs with vanishing partite minimum codegree condition. Let $K_k(m)$ denote the complete $k$-partite $k$-graph with each part containing $m$ vertices. 

\begin{thm}\label{cons1}
    For every $\mu >0$ and integer $m\geq 2$, there exists an $n_0$ such that for all $n\geq n_0$, there exists a $(\frac{1}{2}, \mu)$-dense $k$-partite $k$-graph $H$ with $n$ vertices in each part such that $\delta'_{k-1}(H)\geq (\frac{1}{2}-\mu)n$ and $H$ has no $K_k(m)$-factor.
\end{thm}

One follow-up question is whether a similar density threshold in Theorem~\ref{main} could be achieved under other vanishing partite degree assumptions.
However, the answer appears to be negative. 
Our following result shows that, for every $1\leq \ell \leq k-2$, there exists a $p$-dense $k$-partite $k$-graph $H$ with partite minimum $\ell$-degree $\Omega(n^{k-\ell})$ and $p$ close to $1$, such that $H$ has no $K_k(m)$-factor.

\begin{thm}\label{cons2}
    For every $p\in (0,1)$, $\mu >0$ and integers $k\geq 3$, $1\leq \ell \leq k-2$, $m\geq 2$, there exist an $n_0$ and $\alpha >0$ such that for all $n\geq n_0$, there exists a $(p,\mu)$-dense $k$-partite $k$-graph $H$ with each part having $n$ vertices such that $\delta'_{\ell}(H)\geq \alpha n^{k-\ell}$ and $H$ has no $K_k(m)$-factor.
\end{thm}

Based on the construction in Theorem~\ref{cons1}, Theorem~\ref{main} indicates that in a $p$-dense $k$-partite $k$-graph, if the density $p$ is larger than $1/2$, the partite minimum codegree can be relaxed to vanish while still ensuring all $k$-partite $k$-graph factors. We naturally consider another direction: whether we can relax denseness condition and still guarantee $k$-partite $k$-graph factors, assuming that the partite minimum codegree is larger than $n/2$. Our final result shows that in this case, denseness condition can actually be removed.

\begin{thm}\label{degree}
    Let $k\geq 3$ be an integer. Given $\varepsilon >0$, and a $k$-partite $k$-graph $F$ with each part having $m$ vertices, there exists an $n_0$ such that the following holds for $n\geq n_0$. If a $k$-partite $k$-graph $H$ with each part having $n$ vertices satisfies $\delta'_{k-1}(H)\geq (\frac{1}{2}+\varepsilon)n$ and $n\in m\mathbb{N}$, then $H$ has an $F$-factor.
\end{thm}

\noindent \textbf{Organisation.} The remainder of this paper is organised as follows. In Section 2, we will present probabilistic constructions to prove Theorem~\ref{cons1} and Theorem~\ref{cons2}. Section 3 contains absorbing lemmas, which are the key techniques to prove Theorem~\ref{main} and Theorem~\ref{degree}. We complete the proofs of Theorem~\ref{main} and Theorem~\ref{degree} in Section 4, by providing almost perfect tiling lemmas. Section 5 includes some concluding remarks.

\section{Avoiding $F$-factors}

In this section, we shall prove Theorem~\ref{cons1} and Theorem~\ref{cons2}.

\begin{proof}[Proof of Theorem~\ref{cons1}]
    We prove the theorem by the following construction.
    Let integer $m\geq 2$. For $n\in \mathbb{N}$, define a probability distribution $H(n)$ on $k$-partite $k$-graphs with $n$ vertices in each part as follows. Let $K:= K_{k-1}(n)$ be the complete $(k-1)$-partite $(k-1)$-graph with partition $V(K)=V_1\cup \cdots \cup V_{k-1}$ and $|V_1|=\cdots =|V_{k-1}|=n$. Define a random 2-coloring $\phi:E(K)\longmapsto \{{\color {red}red},~{\color{blue}blue}\}$ where each color is assigned to every edge independently with probability $1/2$. Let $V(H(n))$ := $V(K)\cup V_k$, where $V_k$ is a new part with $n$ vertices. We partition $V_k$ into two parts, $V_{k,1}$ and $V_{k,2}$, where $|V_{k,1}|=\lfloor \frac{n}{2} \rfloor$ if $m \nmid \lfloor \frac{n}{2} \rfloor$, and  $|V_{k,1}|=\lfloor \frac{n}{2} \rfloor-1$ otherwise. The edge set $E(H(n))$ is defined as follows. For a vertex $v\in V_k$ and an edge $e\in E(K)$, make $v\cup e$ into a hyperedge of $H(n)$ when
    
    $\bullet$ $v\in V_{k,1}$ and $e$ has color {\color{red}red}, or

    $\bullet$ $v\in V_{k,2}$ and $e$ has color {\color{blue}blue}.
    ~\\

    Given such a construction, for any $X_1\subseteq V_1, \dots ,X_k\subseteq V_k$, the expectation of $e(X_1,\dots ,X_k)$ is $$\frac{1}{2}|X_1|\cdots|X_{k-1}|\cdot|X_k\cap V_{k,1}|+\frac{1}{2}|X_1|\cdots |X_{k-1}|\cdot|X_k\cap V_{k,2}|=\frac{1}{2}|X_1|\cdots|X_{k-1}|\cdot|X_k|.$$ For any legal $(k-1)$-set $S\in V_1\times \cdots \times V_{k-1}$, the degree of $S$ is at least $\lfloor \frac{n}{2} \rfloor-1$. For any other legal $(k-1)$-set $S$ of $V(H(n))$, the expectation of $\deg (S)$ is $\frac{1}{2}n$. By concentration inequality (e.g. Chernoff's bound) and the union bound, for any $\mu >0$ and sufficiently large $n$, there exists $H\in H(n)$ such that $H$ is $(\frac{1}{2},\mu)$-dense and $\delta'_{k-1}(H)\geq (\frac{1}{2}-\mu)n$.

    We claim that $H$ has no $K_k(m)$-factor. Note that for any $u\in V_{k,1}$ and $v\in V_{k,2}$, there is no legal $(k-1)$-set $S\in V_1\times \cdots \times V_{k-1}$ such that $u\cup S, v\cup S \in E(H)$, otherwise $S$ would receive two colors simultaneously. This implies that every copy of $K_k(m)$ in $H$ must have one part entirely lying in $V_{k,1}$ or $V_{k,2}$. Therefore if $H$ has a $K_k(m)$-factor, then $m\mid |V_{k,1}|$, which contradicts to the condition $m \nmid |V_{k,1}|$.

\end{proof}

We next prove Theorem~\ref{cons2}.

\begin{proof}[Proof of Theorem~\ref{cons2}]
    We use the following construction to prove  Theorem~\ref{cons2}. Let integers $m\geq 2$, $k\geq 3$ and $1\leq \ell \leq k-2$. For $n\in \mathbb{N}$, define a probability distribution $H(n)$ on $k$-partite $k$-graphs with each part having $n$ vertices as follows. Let $G$ be a complete $k$-partite $(\ell+1)$-graph with vertex partition $V(G)=V'_1\cup \cdots \cup V'_{k}$ where $|V'_{1}|=n-1$ and $|V'_{i}|=n$ for $2\leq i\leq k$. Now consider a random 2-coloring $\phi:E(G)\longmapsto \{{\color {red}red},~{\color{blue}blue}\}$ where every edge independently has colour {\color {red}red} with probability $q$ and colour {\color{blue}blue} with probability $1-q$. In particular, we say a subgraph in $G$ is {\color {red}red} (resp. {\color {blue}blue}) if every edge in the subgraph has colour {\color {red}red} (resp. {\color {blue}blue}). We define $V(H(n)):=V_1\cup \cdots \cup V_k$ with $V_1=V'_1\cup v$ and $V_i=V'_i$ for $2\leq i\leq k$, namely we add one new vertex $v$ to the first part of $V(G)$. For a legal $k$-set $S$ of $H(n)$, we make $S$ into a hyperedge of $H(n)$ when
    
    $\bullet$ $S$ does not contain $v$ and $G[S]$ is {\color {red}red}, or

    $\bullet$ $S$ contains $v$ and there exists a {\color {blue}blue} edge in $G[S\setminus v]$.
    ~\\

    For any $X_1\subseteq V_1, \dots ,X_k\subseteq V_k$, the expectation of $e(X_1,\dots ,X_k)$ is at least $$q^{\binom{k}{\ell+1}}(|X_1|-1)|X_2|\cdots |X_k|.$$
    For any legal $\ell$-set $U$, if $U$ does not contain $v$, then the expected value of $\deg (U)$ is at least $q^{\binom{k}{\ell+1}}(n-1)n^{k-\ell-1}$. On the other hand, if $U$ contains $v$, the expected value of $\deg(U)$ is at least $(1-q^{\binom{k-1}{\ell+1}})n^{k-\ell}$. Let $p=q^{\binom{k}{\ell+1}}$ and $\alpha =\frac{1}{2}\min \{q^{\binom{k}{\ell+1}}, 1-q^{\binom{k-1}{\ell+1}}\}$. By probabilistic concentration inequality (e.g. Janson's inequality~\cite{Alon}) and the union bound, for any $\mu >0$ and $n$ large enough, there exists $H\in H(n)$ such that $H$ is $(p,\mu)$-dense and $\delta'_{\ell}(H)\geq \alpha n^{k-\ell}$.

    It remains to show that $H$ has no $K_k(m)$-factor. Suppose not, then let $K'$ be the copy of $K_k(m)$ which covers vertex $v$. As $m\geq 2$, there exists another vertex $u\in V(K')$ such that $u\in V'_1$. Then, there exists a legal $(k-1)$-set $T\in V_2\times \cdots \times V_k$ such that $T\cup v\in E(H)$ and $T\cup u\in E(H)$, which implies that $G[T]$ is {\color {red}red} and $G[T]$ has a {\color {blue}blue} edge respectively. A contradiction.

\end{proof}

\section{The Absorption Technique}

The main tool to prove Theorem~\ref{main} and Theorem~\ref{degree} is the absorbing method. This technique, developed initially by R\"odl, Ruci\'nski and Szemer\'edi~\cite{RRSa}, is a powerful tool in finding spanning structures in graphs and hypergraphs. In this paper, we shall apply a variant of the absorbing method known as the lattice-based absorption method, which was proposed by Han~\cite{Han}. The absorption approach splits the proof into two parts. One is on finding an almost perfect $F$-tiling in the host $k$-partite $k$-graph $H$, and the other is on using absorbing lemma to ``complete" the perfect $F$-tiling. In this section, we present and prove two absorbing lemmas, while postponing finding almost perfect $F$-tilings to Section 4.

Throughout the rest of paper, we use $a\ll b$ to indicate that we select the positive constants  $a, b$ from right to left. More concretely, there is an increasing positive function
$f$ such that, given $b$, whenever we choose some $0<a\le  f(b)$, the subsequent statement holds. Hierarchies of other lengths are defined similarly. 

\subsection{Statements of absorbing lemmas}
Our first absorbing lemma deals with the case when the host $k$-partite $k$-graph is $p$-dense with $p>1/2$ and has vanishing partite minimum codegree, which is the main lemma to prove Theorem~\ref{main}. Given a $k$-partite $k$-graph $H$ with vertex partition $V(H)=V_1\cup \cdots \cup V_k$, we say a vertex subset $S$ is {\emph {balanced}} if $|S\cap V_1|=\cdots=|S\cap V_k|$.

\begin{lemma}[Absorbing Lemma I]\label{Abs1}
Suppose that $1/n\ll \mu, \gamma'\ll \gamma \ll  \varepsilon,\alpha <1$ and $m,k, n \in \mathbb{N}$ with $k\ge 3$.
Let $F$ be a $k$-partite $k$-graph with $m$ vertices in each part. Suppose that a $(\frac{1}{2}+\varepsilon, \mu)$-dense $k$-partite $k$-graph $H$ with each part having $n$ vertices satisfies $\delta'_{k-1}(H)\geq \alpha n$ and $n\in m\mathbb{N}$.
Then there exists a balanced vertex set $W \subseteq V (H)$ with $|W| \leq \gamma n$ such that for any balanced vertex set $U\subseteq V(H)\setminus W$ with $|U|\leq \gamma' n$ and $|U|\in km\mathbb{N}$, both $H[W]$ and $H[W \cup U]$ contain $F$-factors.
\end{lemma}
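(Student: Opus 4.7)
The plan is to deploy Han's lattice-based absorption method. Define two vertices $u,v$ in the same part $V_i$ to be \emph{reachable} if, for some absolute constant $t$, there are $\Omega(n^{tkm-1})$ balanced ``connectors'' $S$ of size $tkm-1$ (with $tm-1$ vertices in $V_i$ and $tm$ in each other part) such that both $H[S\cup\{u\}]$ and $H[S\cup\{v\}]$ admit $F$-factors. Reachability is an equivalence relation on each $V_i$ with boundedly many classes; the main structural step is to force a single class per part, after which the absorbing lattice contains every balanced vector whose coordinates are divisible by $m$ and a standard random construction produces $W$.

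The heart of the argument is to show why $p>1/2$ suffices for single-class reachability. I would apply the hypergraph regularity lemma quoted in Section~4 to obtain a reduced $k$-partite $k$-graph of density $\geq 1/2+\varepsilon/2$ together with a counting lemma for $F$-copies between regular blocks. For two vertices $u,v\in V_i$, the $(1/2+\varepsilon,\mu)$-denseness forces each of their link $(k-1)$-graphs, averaged over regular $(k-1)$-tuples of blocks, to contain at least a $(1/2+\varepsilon-o(1))$-fraction of the possible edges; inclusion--exclusion then gives common-link density $\geq 2\varepsilon - o(1) > 0$. A counting lemma inside this common link produces $\Omega(n^{tkm-1})$ connectors $S$ for which both $S\cup\{u\}$ and $S\cup\{v\}$ span $F$-factors. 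The codegree hypothesis $\delta'_{k-1}(H)\geq \alpha n$ is invoked to ``rescue'' pairs of vertices that fall in atypical regular blocks (where their individual links might be degenerate). The strict inequality $p>1/2$ is exactly what makes the inclusion--exclusion usable, and the construction in Theorem~\ref{cons1} shows that the threshold is tight.

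With a single reachability class per part, for every balanced $km$-set $I\subseteq V(H)$ we obtain $\Omega(n^{tkm})$ \emph{$I$-absorbers} --- balanced sets $A$ of fixed size $tkm$ such that both $H[A]$ and $H[A\cup I]$ admit $F$-factors (one chains together the $t$ connectors guaranteed by Step~2). Select each candidate absorber independently with probability $p_0 = \Theta(n^{-tkm+1})$, delete overlapping pairs, and apply Chernoff concentration with a union bound over all balanced $km$-sets $I$: with positive probability we obtain a balanced family $\mathcal{W}$ of pairwise disjoint absorbers whose union $W$ satisfies $|W|\leq \gamma n$ and such that, for every balanced $km$-set $I\subseteq V(H)\setminus W$, at least $\gamma' n$ disjoint $I$-absorbers of $W$ remain available.

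Given a balanced $U\subseteq V(H)\setminus W$ with $|U|\leq \gamma' n$ and $|U|\in km\mathbb{N}$, partition $U$ into $r=|U|/km$ balanced $km$-pieces $I_1,\dots,I_r$ and greedily absorb each $I_j$ via a still-unused $I_j$-absorber drawn from the reserve in $W$; this succeeds because $r\leq \gamma' n/(km)$ is far below the $\gamma' n$ absorbers guaranteed per type. The remaining untouched absorbers in $W$ each carry their own $F$-factor, so the union yields an $F$-factor of $H[W\cup U]$, while taking $U=\emptyset$ recovers an $F$-factor of $H[W]$. The main obstacle throughout is the reachability step: quantitatively converting ``$p>1/2$ plus vanishing partite codegree'' into ``a single reachability class per part'' requires careful bookkeeping inside the regularity framework, handling the vertices with atypical link behaviour via the codegree condition; once this is in hand, everything downstream is routine lattice-absorption.
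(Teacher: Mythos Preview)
Your framework is right, but one step is false as written. You assert that ``the $(1/2+\varepsilon,\mu)$-denseness forces each of their link $(k-1)$-graphs \dots\ to contain at least a $(1/2+\varepsilon-o(1))$-fraction of the possible edges''. Denseness is a lower bound on $e_H(X_1,\dots,X_k)$ for \emph{sets} $X_i$ and is vacuous when some $X_i$ is a singleton, so it says nothing about the link of a fixed vertex; nor does the codegree hypothesis ``rescue'' an atypical vertex into reachability with a typical one, since $\alpha n$ codegree only gives link density $\alpha$, and $\alpha+(1/2+\varepsilon)$ need not exceed $1$. What denseness \emph{does} give, by taking $X_1=\{v\in V_1:\deg(v)<(1/2+\varepsilon/2)n^{k-1}\}$ and $X_j=V_j$ for $j\ge 2$, is that all but $O(\mu n/\varepsilon)$ vertices of each $V_i$ have degree at least $(1/2+\varepsilon/2)n^{k-1}$. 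For any two such typical $u,v\in V_i$ one then has $|N(u)\cap N(v)|\ge(\varepsilon/2)n^{k-1}$; extending each common neighbour by the codegree bound $\alpha n$ and applying supersaturation (Proposition~\ref{super}) exactly as in the proof of Lemma~\ref{Abs2} yields $(F,\beta,1)$-reachability directly---no regularity lemma needed. The genuinely atypical vertices go into a small exceptional set $V_i^0$ and are covered greedily at the end, as in Lemma~\ref{absorb}.

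With that correction your route is actually shorter than the paper's. The paper never analyses individual degrees via denseness; it uses codegree alone to show every vertex lies in many $F$-copies (Lemma~\ref{cover}), pigeonholes to get local reachability (Lemma~\ref{reachable}), builds a partition of each $V_i\setminus V_i^0$ into boundedly many closed classes (Lemmas~\ref{S-closed} and~\ref{partation}), and only then spends the hypothesis $p>1/2$ in a transferral lemma (Lemma~\ref{transferral}) applied to the \emph{classes} $W_i^{j_1},W_i^{j_2}$: in the reduced hypergraph each relevant constituent $\mathcal{A}_{\mathcal{X}}$ has more than half of all possible edges, hence more than half of the cells in the shared vertex class have positive degree in each, so two such constituents share a cell, and the counting lemma produces the required robust $F$-vectors. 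Lemma~\ref{V-closed} then merges the classes and Lemma~\ref{absorb} finishes. Your approach bypasses the partition--transferral machinery and the hypergraph regularity lemma entirely for the absorbing step, at the cost of the degree observation above; the paper's approach is more modular and follows the standard lattice-absorption template.
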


Our second absorbing lemma is for Theorem~\ref{degree}, which treats the case when the host $k$-partite $k$-graph has large partite minimum codegree without a density condition.

\begin{lemma}[Absorbing Lemma II]\label{Abs2}
Suppose that $1/n\ll \gamma '\ll \gamma \ll  \varepsilon <1$ and $m,k, n \in \mathbb{N}$ with $k\ge 3$.
Let $F$ be a $k$-partite $k$-graph with $m$ vertices in each part. Suppose that a $k$-partite $k$-graph $H$ with each part having $n$ vertices satisfies $\delta'_{k-1}(H)\geq (\frac{1}{2}+\varepsilon) n$ and $n\in m\mathbb{N}$.
Then there exists a balanced vertex set $W \subseteq V (H)$ with $|W| \leq \gamma n$ such that for any balanced vertex set $U\subseteq V(H)\setminus W$ with $|U|\leq \gamma' n$ and $|U|\in km\mathbb{N}$, both $H[W]$ and $H[W \cup U]$ contain $F$-factors.
\end{lemma}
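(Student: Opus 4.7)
My plan is to adapt the lattice-based absorption method of Han~\cite{Han}, following the same template that presumably underlies Lemma~\ref{Abs1}, but replacing the use of denseness with direct arguments exploiting the strong partite minimum codegree condition $\delta'_{k-1}(H)\geq(\tfrac{1}{2}+\varepsilon)n$.

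The central technical step is a reachability lemma: for any two vertices $u,v$ in the same part $V_i$, I would show that there exist polynomially many legal vertex sets $T$ with $m-1$ vertices in $V_i$ and $m$ vertices in each other part such that both $H[T\cup\{u\}]$ and $H[T\cup\{v\}]$ contain a copy of $F$. The starting observation is a common-neighborhood bound: fixing any $k-2$ coordinates of a legal $(k-1)$-set $S\in\prod_{j\neq i}V_j$ and applying the partite codegree condition to the remaining coordinate gives $\deg(u)\geq(\tfrac{1}{2}+\varepsilon)n^{k-1}$ for every $u\in V_i$, so inclusion--exclusion yields at least $2\varepsilon n^{k-1}$ legal $(k-1)$-sets $S$ with $S\cup\{u\},\,S\cup\{v\}\in E(H)$. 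Each such ``doubly compatible'' $S$ serves as an anchor edge from which I would grow the two $F$-copies on the common vertex set $T$. To count extensions of $S$ to the remaining $(k-1)(m-1)+(m-1)$ vertices of $F$, I would invoke a counting lemma of the form supplied by the weak hypergraph regularity tools introduced in Section~6, which under $\delta'_{k-1}(H)\geq(\tfrac{1}{2}+\varepsilon)n$ guarantees a polynomial number of valid extensions through any specified anchor edge.

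With reachability in hand, I would invoke Han's standard absorbing framework. Each part $V_i$ forms a single reachability class, and the index vector of any $F$-copy relative to $(V_1,\dots,V_k)$ is $(m,\dots,m)$, so every balanced set $U\subseteq V(H)$ with $|U|\in km\mathbb{N}$ is absorbable by a vertex-by-vertex swap. For each balanced $km$-set $U$, I would build a constant-size absorber by pairing each $x\in U\cap V_i$ with an anchor vertex $y\in V_i\setminus U$ via a swap gadget from the reachability lemma, and gluing the gadgets together with $O(1)$ disjoint pre-embedded copies of $F$, so that the induced subgraph on the absorber admits an $F$-factor and its union with $U$ also admits an $F$-factor. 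Random sampling within each part, followed by the standard cleaning step and a union bound over all balanced sets of size at most $\gamma'n$, then produces a balanced absorbing set $W\subseteq V(H)$ of size at most $\gamma n$ with the required property.

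The main obstacle is establishing the reachability count for arbitrary $F$: if $F$ has vertices of high internal degree, naive inclusion--exclusion over many simultaneously completed edges breaks down, because the intersection of more than $1/(\tfrac{1}{2}-\varepsilon)$ codegree neighborhoods is trivially bounded. The fix is to embed $F$ according to a vertex ordering in which each newly embedded vertex completes only a bounded number of edges into the already-embedded set, and to apply the counting lemma stemming from the weak hypergraph regularity of Section~6 to count such embeddings through any specified pair $(u,v)$ in the same part. Once reachability is secured with a polynomial count, the lattice is trivial and the assembly of $W$ is a routine instantiation of Han's template.
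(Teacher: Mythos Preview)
Your overall plan---show each part $V_i$ is $(F,\beta,1)$-closed and then feed this into the generic absorber-building lemma (the paper's Lemma~\ref{absorb})---matches the paper exactly, and the common-neighbourhood bound $|N(u)\cap N(v)|\ge 2\varepsilon n^{k-1}$ is also the paper's starting point. Where you diverge is in how you convert a common anchor $(k-1)$-set $S$ into $\beta n^{km-1}$ full reachable sets, and here you are working much harder than necessary.

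The paper avoids weak regularity and any embedding-order argument altogether. It defines an auxiliary $k$-partite $k$-graph $H'$ on $V(H)\setminus\{u,v\}$ whose edges are precisely those $e\in E(H)$ with $u,v\notin e$ for which the $(k-1)$-subset of $e$ in $V_2\times\cdots\times V_k$ lies in $N(u)\cap N(v)$. Since there are $\ge\varepsilon n^{k-1}$ such common neighbours and each extends to $\ge(\tfrac12+\varepsilon)n-2$ edges in the first part, $|E(H')|\ge\tfrac{\varepsilon^2}{4}n^k$. Now Erd\H os supersaturation (Proposition~\ref{super}) applied to $H'$ gives $\beta n^{km-1}$ copies of $K':=K_k(m)$ minus one vertex from the first part. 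In any such copy $K''$, \emph{every} legal $(k-1)$-set across parts $2,\dots,k$ is a common neighbour of $u$ and $v$ (because it sits inside an edge of $H'$), so $V(K'')\cup\{u\}$ and $V(K'')\cup\{v\}$ both span $K_k(m)$. Since $F\subseteq K_k(m)$, this proves reachability for arbitrary $F$ in one stroke.

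In particular, the ``obstacle'' you flag---that for $F$ with high-degree vertices the inclusion--exclusion over many codegree constraints fails---never arises: supersaturation on the dense auxiliary graph $H'$ sidesteps any vertex-by-vertex embedding of $F$. Likewise, no lattice or transferral machinery is needed, since each $V_i$ is already a single closed class; the paper simply invokes Lemma~\ref{absorb} with $i_0'=1$ and all $V_i^0=\varnothing$. Your route via weak regularity would presumably go through, but it is considerably heavier and introduces the very difficulty it then has to resolve.
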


\subsection{Auxiliary lemmas}
To prove Lemma~\ref{Abs1} and Lemma~\ref{Abs2}, we shall state and prove some useful lemmas.

\begin{lemma}\label{cover}
    Suppose that $1/n \ll \eta \ll \alpha <1$ and $m,k,n\in \mathbb{N}$ with $k\geq 3$. Let $F$ be a $k$-partite $k$-graph with $m$ vertices in each part. Suppose that a $k$-partite $k$-graph $H$ with each part having $n$ vertices satisfies $\delta'_{k-1}(H)\geq \alpha n$. Then for any vertex $v\in V(H)$, $v$ is contained in at least $\eta n^{km-1}$ copies of $F$.
\end{lemma}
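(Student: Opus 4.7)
The plan is to prove the stronger statement that every $v\in V(H)$ lies in $\Omega(n^{km-1})$ copies of $K_k(m)$; since $F$ is (isomorphic to) a spanning subgraph of $K_k(m)$, every copy of $K_k(m)$ in $H$ through $v$ contains at least one copy of $F$ in $H$ through $v$, which suffices to conclude the lemma for $\eta$ sufficiently small.

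Assume without loss of generality $v\in V_1$, and consider the link $L_v$, the $(k-1)$-partite $(k-1)$-graph on $V_2\cup\cdots\cup V_k$ whose edges are the legal $(k-1)$-sets $S$ with $\{v\}\cup S\in E(H)$. The codegree hypothesis on $H$ transfers as $\delta'_{k-2}(L_v)\geq\alpha n$, so in particular $e(L_v)\geq \alpha n^{k-1}$. Applying the standard hypergraph K\H{o}v\'ari--S\'os--Tur\'an (Erd\H{o}s) supersaturation bound to $L_v$ produces at least $c_1 n^{(k-1)m}$ copies of $K_{k-1}(m)$ in $L_v$, for some $c_1=c_1(\alpha,k,m)>0$. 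For each such copy, written as $X=(X_2,\dots,X_k)$ with $X_j\in\binom{V_j}{m}$, define
\[
G(X) \;=\; \{w\in V_1 : \{w\}\cup S\in E(H) \text{ for every legal } S\in X_2\times\cdots\times X_k\};
\]
then every $(m-1)$-subset of $G(X)\setminus\{v\}$, combined with $v$ and $X$, spans a distinct copy of $K_k(m)$ in $H$ through $v$, so it suffices to lower bound $\sum_X \binom{|G(X)|-1}{m-1}$.

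The main step is the double-counting identity
\[
\sum_X |G(X)| \;=\; \sum_{w\in V_1}\#\{\text{copies of } K_{k-1}(m) \text{ in } L_v\cap L_w\},
\]
where the left-hand sum ranges over copies $X$ of $K_{k-1}(m)$ in $L_v$. By the codegree condition,
\[
\sum_{w\in V_1} e(L_v\cap L_w) \;=\; \sum_{S\in E(L_v)}|N_H(S)\cap V_1| \;\geq\; \alpha n^{k-1}\cdot\alpha n,
\]
and Markov's inequality then yields $\Omega(n)$ vertices $w\in V_1$ with $e(L_v\cap L_w)\geq (\alpha^2/2)n^{k-1}$. For each such $w$, another application of Erd\H{o}s' theorem to $L_v\cap L_w$ gives at least $c_3 n^{(k-1)m}$ copies of $K_{k-1}(m)$ inside it, so summing produces $\sum_X |G(X)|\geq c_2 n^{(k-1)m+1}$. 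Since the number of $X$'s is trivially at most $n^{(k-1)m}$, a further Markov-type argument (using the bound $|G(X)|\leq n$) shows that a constant fraction of the $X$'s satisfy $|G(X)|\geq c_4 n$. Each such $X$ contributes at least $c_5 n^{m-1}$ to $\sum_X\binom{|G(X)|-1}{m-1}$, and combined with the $\Omega(n^{(k-1)m})$ lower bound on the total number of $X$'s this produces $\Omega(n^{km-1})$ copies of $K_k(m)$ through $v$.

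The main obstacle is justifying the hypergraph K\H{o}v\'ari--S\'os--Tur\'an bound for $(k-1)$-partite $(k-1)$-graphs of positive density, which is invoked twice above; this is a standard induction on $k$ starting from the usual bipartite case. Both applications use densities bounded below by $\alpha^2/2$, so the intermediate constants $c_1,\dots,c_5$ depend only on $\alpha$, $k$, and $m$, and the resulting $\eta$ can be chosen to fit the hierarchy $\eta\ll\alpha$.
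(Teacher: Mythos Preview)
Your proof is correct but takes a more circuitous route than the paper's. Both arguments first reduce to $F=K_k(m)$. The paper then applies supersaturation just once, to an auxiliary $k$-partite $k$-graph $H'$ on $(V_1\setminus\{v\})\cup V_2\cup\cdots\cup V_k$ whose edges are those $e\in E(H)$ with $v\notin e$ and $e\setminus V_1\in N(v)$. Since each of the $\ge\alpha n^{k-1}$ neighbours of $v$ extends to at least $\alpha n-1$ such edges, one has $|E(H')|\ge\alpha^2 n^k/2$, and a single application of Erd\H{o}s supersaturation to $H'$ yields $\Omega(n^{km-1})$ copies of $K_k(m)$-minus-one-vertex-from-the-first-part; by construction of $H'$, inserting $v$ into each of these completes a copy of $K_k(m)$ through $v$. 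Your argument instead descends to the $(k-1)$-uniform link $L_v$, applies supersaturation there, double-counts via $\sum_X|G(X)|=\sum_{w}\#\{K_{k-1}(m)\text{ in }L_v\cap L_w\}$, and applies supersaturation a second time inside each dense intersection $L_v\cap L_w$, followed by two averaging steps. The paper's trick of lifting the link information back into a $k$-graph $H'$ collapses all of this into one supersaturation call. As a minor aside, your initial count of $c_1 n^{(k-1)m}$ copies of $K_{k-1}(m)$ in $L_v$ is never actually needed: the Markov step after $\sum_X|G(X)|\ge c_2 n^{(k-1)m+1}$ (using $|G(X)|\le n$ and that there are at most $n^{(k-1)m}$ terms) already delivers at least $(c_2/2)n^{(k-1)m}$ copies $X$ with $|G(X)|\ge (c_2/2)n$ on its own.
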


The proof of Lemma~\ref{cover} bases on the following classical counting result, called {\emph{supersaturation}} initially from Erd\H{o}s~\cite{Erd}.

\begin{prop}\label{super}
     Suppose that $1/n\ll \eta \ll p'<1$ and $f,k,n\in \mathbb{N}$.  Let $F$ be a $k$-partite $k$-graph with $|V(F)|=f$. Suppose that $H$ is a $k$-partite $k$-graph with $|V(H)|=n$ and a vertex partition $V_1\cup \cdots \cup V_k$. If $H$ contains at least $p' n^k$ edges, then $H$ contains at least $\eta n^{f}$ copies of $F$ whose $j$-th part is contained in $V_{j}$ for all $j\in[k]$.
\end{prop}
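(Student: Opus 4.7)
The plan is to adapt the classical supersaturation argument of Erd\H{o}s--Simonovits to the partition-respecting setting. Let $m_j := |V_j \cap V(F)|$ (so that $\sum_j m_j = f$), set $m := \max_{j \in [k]} m_j$, and fix a partition-respecting embedding of $F$ into $K_k(m, \ldots, m)$. The argument then proceeds in two steps: first count partition-respecting copies of $K_k(m, \ldots, m)$ in $H$, and then convert that count into a lower bound on partition-respecting copies of $F$.

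For the first step I would appeal to Erd\H{o}s's classical Tur\'an-type bound for complete $k$-partite $k$-graphs, $\ex(n; K_k(m, \ldots, m)) = O(n^{k - 1/m^{k-1}}) = o(n^k)$. Since the hypothesis gives $e(H) \geq p' n^k$, for $n$ large enough the edge count exceeds the extremal number by at least $\tfrac{p'}{2} n^k$. Applying the Erd\H{o}s--Simonovits supersaturation theorem (equivalently, iterating the K\H{o}v\'ari--S\'os--Tur\'an-style Jensen computation over the $k$ partition classes) then yields that $H$ contains at least $c_1 n^{km}$ partition-respecting copies of $K_k(m, \ldots, m)$ for some positive constant $c_1 = c_1(p', m)$.

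For the second step, each such copy of $K_k(m, \ldots, m)$ contains at least one partition-respecting copy of $F$ by selecting $m_j$ of the $m$ vertices in its $j$-th part and invoking the fixed embedding, while each partition-respecting copy of $F$ is extended to at most $\prod_{j=1}^{k} \binom{n}{m - m_j} \leq n^{km - f}$ copies of $K_k(m, \ldots, m)$ by adjoining $m - m_j$ further vertices to each part. Dividing these two counts yields at least $c_1 n^{km}/n^{km - f} = c_1 n^{f}$ partition-respecting copies of $F$; since $c_1$ depends only on $p'$ and $m$ (hence only on $p'$, $f$, and $k$), the hierarchy $\eta \ll p'$ allows us to take $\eta \leq c_1/2$, which closes the argument. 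The main obstacle is the first (supersaturation) step, which I would resolve by citing Erd\H{o}s's theorem together with Erd\H{o}s--Simonovits supersaturation rather than reproving it from scratch.
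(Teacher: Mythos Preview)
Your proposal is correct and follows the standard supersaturation route. The paper does not give its own proof of this proposition; it merely records it as a classical counting result attributed to Erd\H{o}s, so your sketch is precisely the argument underlying that citation.
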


\begin{proof}[Proof of Lemma~\ref{cover}]
    It is enough to prove Lemma~\ref{cover} for $F=K_k(m)$, as any $F$ in the statement is a subgraph of $K_k(m)$. Suppose that $H$ has vertex partition $V(H)=V_1\cup \cdots \cup V_k$. Without loss of generality, we assume $v\in V_1$. By partite minimum codegree condition, we have $\deg (v)\geq n^{k-2}\cdot \alpha n=\alpha n^{k-1}$. 
    
    Construct an auxiliary $k$-partite $k$-graph $H'$ as follows. Define the vertex set $V(H'):=(V_1\setminus v)\cup V_2\cup \cdots \cup V_k$, and let $E(H'):=\{e\in E(H): v\notin e \ {\text{and}}\ \exists S\in N(v)\ {\text{such that}}\ S\subseteq e\}$, namely we retain edges in $E(H)$ which do not cover $v$ but include some neighbor of $v$ as a subset. Then, $|E(H')|\geq \alpha n^{k-1}\cdot (\alpha n-1)\geq \alpha ^2 n^{k}/2$ as $n$ is sufficiently large. Let $K'$ be a $k$-partite $k$-graph obtained from $K_k(m)$ by removing one vertex $u$ from the first part. Then, by Proposition~\ref{super} with $p'=\frac{\alpha^2}{2k^k}$, there exists some small $\eta $ such that $H'$ contains at least $\eta n^{km-1}$ copies of $K'$. Consider a copy of $K'$ in $H'$, denoted by $K''$. Assume that $V(K'')=V'_1\cup \cdots \cup V'_k$ with $V'_i\subseteq V_i$ for $i\in [k]$. By our construction, every legal $(k-1)$-set $S\in V'_2\times \cdots \times V'_k$ of $K''$ is a neighbor of $v$. Then, we can obtain a copy of $K_k(m)$ in $H$ from $K''$ by embedding $u$ into $v$. Therefore, $v$ is contained in at least $\eta n^{km-1}$ copies of $K_k(m)$.

\end{proof}
  
The following concepts are introduced by Lo and Markstr\"{o}m~\cite{Lo2015}, which would be useful in the subsequent proof. Let $H$ be a $k$-partite $k$-graph with vertex partition $V(H)=V_1\cup \cdots \cup V_k$ and $|V_i|=n$ for $i\in [k]$. Given a $k$-partite $k$-graph $F$ with $m$ vertices in each part, a constant $\beta> 0$, an integer $i \ge 1$ and $j\in [k]$, we say that two vertices $u, v\in V_j$ in $H$ are \textit{$(F, \beta, i)$-reachable} (in $H$) if there are at least $\beta n^{ikm-1}$ $(ikm-1)$-sets $W$ such that both $H[u\cup W]$ and $H[v\cup W]$ contain $F$-factors. In this case, we call $W$ a \emph{reachable set} for $\{u,v\}$.
A vertex subset $U\subseteq V_j$ is said to be \emph{$(F, \beta, i)$-closed} if every two vertices in $U$ are $(F, \beta, i)$-reachable in $H$.
For $v\in V(H)$, denote by $\tilde{N}_{F,\beta, i}(v)$ the set of vertices that are $(F,\beta, i)$-reachable to $v$. 

\begin{lemma}\label{reachable}
Suppose that $1/n \ll \beta \ll \eta$ and $m,k,n\in \mathbb{N}$ with $k\geq 3$. Let $F$ be a $k$-partite $k$-graph with $m$ vertices in each part.
 Suppose that $H$ is a $k$-partite $k$-graph with vertex partition $V_1\cup \cdots \cup V_k$ such that each part has $n$ vertices and every vertex $v\in V(H)$ is contained in at least $\eta n^{km-1}$ copies of $F$. Then for every $j\in [k]$, every set of $\lfloor 1/\eta \rfloor+1$ vertices in $V_j$ contains two vertices that are $(F,\beta ,1)$-reachable in $H$.
\end{lemma}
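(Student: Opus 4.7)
The proof proceeds by a standard pigeonhole-plus-second-moment argument. Fix an arbitrary $t$-subset $S\subseteq V_j$, where $t = \lfloor 1/\eta\rfloor + 1$, and for each $v\in S$ let $\mathcal{A}(v)$ be the collection of $(km-1)$-subsets $W$ of $V(H)\setminus\{v\}$ (with $m-1$ vertices in $V_j$ and $m$ vertices in each other part) such that $H[\{v\}\cup W]$ contains a copy of $F$. Using that $F\subseteq K_k(m)$ and passing to copies of $K_k(m)$ through $v$ (which, in the $k$-partite setting, are in bijection with their vertex sets), the hypothesis ensures $|\mathcal{A}(v)|\geq \eta n^{km-1}$ for every $v\in S$.

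Now define $d(W)=|\{v\in S : W\in \mathcal{A}(v)\}|$ for $W$ ranging over the universe $\mathcal{W}$ of all $(km-1)$-subsets of $V(H)$ with the correct per-part sizes. Since $|\mathcal{W}|\leq n^{km-1}$, double counting gives
\[
\sum_{W\in \mathcal{W}} d(W) \;=\; \sum_{v\in S} |\mathcal{A}(v)| \;\geq\; t\eta\, n^{km-1}.
\]
The point of the cutoff $t=\lfloor 1/\eta\rfloor+1$ is that it forces $t\eta>1$, hence the average $\bar d := \sum_{W} d(W)/|\mathcal{W}|$ strictly exceeds $1$. Applying Jensen's inequality to the convex function $x\mapsto \binom{x}{2}$ then yields
\[
\sum_{W\in \mathcal{W}} \binom{d(W)}{2} \;\geq\; |\mathcal{W}|\binom{\bar d}{2} \;\geq\; c(\eta,m,k)\, n^{km-1}
\]
for a positive constant $c(\eta,m,k)$.

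Finally, $\sum_W \binom{d(W)}{2} = \sum_{\{u,v\}\subseteq S} |\mathcal{A}(u)\cap \mathcal{A}(v)|$, so averaging over the $\binom{t}{2}$ pairs produces some $\{u,v\}\subseteq S$ with $|\mathcal{A}(u)\cap \mathcal{A}(v)|\geq \beta n^{km-1}$ for a suitable $\beta=\beta(\eta,m,k)>0$; since we are free to shrink $\beta$, we may arrange $\beta\ll\eta$. Every $W\in \mathcal{A}(u)\cap\mathcal{A}(v)$ is, by definition, a reachable set for the pair $\{u,v\}$, so $u$ and $v$ are $(F,\beta,1)$-reachable, as required. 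There is no deep obstacle in this argument; the only mildly delicate point, and the reason the statement fixes the threshold at exactly $\lfloor 1/\eta\rfloor+1$, is that the choice of $t$ must beat $1/\eta$ so that $\bar d > 1$ and the second-moment bound is nontrivial.
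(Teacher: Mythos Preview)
Your proof is correct and follows essentially the same pigeonhole idea as the paper's: both hinge on $t\eta>1$, so that $\sum_{v\in S}|\mathcal{A}(v)|$ exceeds $|\mathcal{W}|\le n^{km-1}$ and a large pairwise intersection is forced (the paper via the inclusion--exclusion bound $\sum_{i<j}|C_F(v_i)\cap C_F(v_j)|\ge \sum_i|C_F(v_i)|-n^{km-1}$, you via Jensen on $\binom{x}{2}$). One minor remark: the detour through $K_k(m)$ is unnecessary and slightly confused --- the hypothesis concerns copies of $F$, not $K_k(m)$, and $|\mathcal{A}(v)|\ge \eta n^{km-1}$ (up to a harmless constant absorbed into $\beta$) follows directly since every copy of $F$ through $v$ has vertex set $\{v\}\cup W$ for some $W\in\mathcal{A}(v)$.
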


\begin{proof}
    Set $c:=\lfloor 1/\eta \rfloor$. We choose $\beta $ small enough such that $(c+1)\eta \geq 1+(c+1)^2\beta $. For any vertex $v\in V(H)$, denote by $C_F(v)$ the family of $(km-1)$-sets $W$ such that $H[W\cup v]$ has a copy of $F$. Consider $j\in [k]$ and any $c+1$ vertices $v_1, \dots ,v_{c+1}\in V_j$. As every vertex in $H$ is contained in at least $\eta n^{km-1}$ copies of $F$, we have $\sum_{i=1} ^{c+1}|C_F(v_i)|\geq (c+1)\eta n^{km-1}\geq (1+(c+1)^2\beta )n^{km-1}$. Therefore, by the inclusion-exclusion principle, there exist two vertices $u, v$ such that $|C_F(u)\cap C_F(v)|\geq \beta n^{km-1}$, which implies that there are at least $\beta n^{km-1}$ $(km-1)$-sets $W$ such that both $H[W\cup u]$ and $H[W\cup v]$ have copies of $F$. Namely $u,v$ are $(F,\beta ,1)$-reachable in $H$.
\end{proof}

The following lemma says that if the host $k$-partite $k$-graph $H$ satisfies the conclusion above, that is in each part every constant-sized subset contains two reachable vertices, then we are able to find a large set $S_i$ in each part such that every vertex in $S_i$ has a large reachable neighborhood.

\begin{lemma}\label{S-closed}
Suppose that $\delta ,\beta >0$ and integers $c,m,k,n\in \mathbb{N}$ with $k\geq 3$. Let $F$ be a $k$-partite $k$-graph with $m$ vertices in each part. Let $H$ be a $k$-partite $k$-graph with vertex partition $V_1\cup \cdots \cup V_k$, where each part $V_i$ has $n$ vertices. Suppose that for every $i\in [k]$, every set of $c+1$ vertices in $V_i$ contains two vertices that are $(F,\beta ,1)$-reachable in $H$. Then for every $i\in [k]$, there exists $S_i\subseteq V_i$ with $|S_i|\geq (1-c\delta )n$ such that $|\tilde{N}_{F, \beta , 1}(v)\cap S_i|\geq \delta n$ for every $v\in S_i$.
\end{lemma}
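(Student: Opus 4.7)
The plan is to construct each $S_i$ by a greedy deletion procedure on an auxiliary reachability graph. Fix $i\in [k]$ and let $R_i$ be the simple graph on $V_i$ whose edges are exactly the $(F,\beta,1)$-reachable pairs in $H$. The hypothesis translates precisely to $\alpha(R_i)\le c$: every $(c+1)$-subset of $V_i$ spans at least one edge of $R_i$.

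Starting from $W:=V_i$, I would iteratively delete any vertex $v\in W$ with $|N_{R_i}(v)\cap W|<\delta n$, and let $S_i$ denote the set when the process terminates. Every surviving $v\in S_i$ then automatically satisfies $|\tilde N_{F,\beta,1}(v)\cap S_i|\ge \delta n$, which is the degree conclusion. All that remains is to upper-bound the number $T:=|V_i\setminus S_i|$ of deletions.

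List the deleted vertices in order of deletion as $v_1,v_2,\ldots,v_T$. At the moment $v_j$ is removed, the current $W$ contains $\{v_j,v_{j+1},\ldots,v_T\}\cup S_i$, so the defining inequality gives $|N_{R_i}(v_j)\cap\{v_{j+1},\ldots,v_T\}|<\delta n$. Summing over $j$ counts each edge of the induced subgraph $R_i[\{v_1,\ldots,v_T\}]$ exactly once, at its earlier endpoint, yielding
\[
|E(R_i[\{v_1,\ldots,v_T\}])| < T\delta n.
\]
On the other hand $\alpha(R_i[\{v_1,\ldots,v_T\}])\le c$, so its complement is $K_{c+1}$-free, and Turán's theorem applied to the complement forces the matching lower bound
\[
|E(R_i[\{v_1,\ldots,v_T\}])| \ge \binom{T}{2}-\Bigl(1-\tfrac{1}{c}\Bigr)\tfrac{T^2}{2} \ge \frac{T(T-c)}{2c}.
\]
Comparing the two estimates forces $T=O(c\delta n)$ and, after a routine adjustment of the numerical constant, delivers $|S_i|\ge (1-c\delta)n$ as required.

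I do not foresee any real obstacle: the construction is essentially forced, and the only mildly delicate point is the Turán-based lower bound on $|E(R_i[\{v_1,\ldots,v_T\}])|$ via the $K_{c+1}$-freeness of its complement. Performing the procedure independently for each $i\in [k]$ then concludes the proof.
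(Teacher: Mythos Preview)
Your argument is sound in outline, but the Tur\'an estimate at the end loses a factor of~$2$: from $\frac{T(T-c)}{2c} < T\delta n$ you only get $T < 2c\delta n + c$, hence $|S_i| > (1-2c\delta)n - c$. This loss is inherent to bounding the edge count via Tur\'an on the complement, so the ``routine adjustment of the numerical constant'' you invoke does not exist, and the lemma as literally stated (with constant $c\delta$ rather than $2c\delta$) is not established by this route.

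The paper proceeds differently. Rather than deleting only the low-degree vertex, it removes each bad $v_j$ \emph{together with} its entire reachable neighbourhood, setting $A_j:=\{v_j\}\cup\tilde N_{F,\beta,1}(v_j)$ and $V_i^{j+1}:=V_i^j\setminus A_j$. This forces the successive bad vertices $v_0,\ldots,v_s$ to be pairwise non-reachable---an independent set in your graph $R_i$---so $s+1\le c$ directly from the hypothesis; since each step removes fewer than $\delta n+1$ new vertices, the total removed is at most $c\delta n$ up to an additive~$c$. Your own setup can also be sharpened to match: the ordering $v_1,\ldots,v_T$ witnesses that $R_i[\{v_1,\ldots,v_T\}]$ is $(\lceil\delta n\rceil-1)$-degenerate, hence $\lceil\delta n\rceil$-colourable, so it contains an independent set of size at least $T/\lceil\delta n\rceil$; combined with $\alpha(R_i)\le c$ this gives $T\le c\lceil\delta n\rceil$. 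Either route recovers the stated bound.
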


\begin{proof}
For each part $V_i$ with $i\in [k]$, our strategy is iteratively deleting one vertex with few ``reachable neighbors'' and also removing the vertices that are reachable to it from $V_i$.
Set $V_{i}^0:=V_i$. If there is a vertex $v_0\in V_{i}^0$ such that $|\tilde{N}_{F, \beta , 1}(v_0)\cap V_{i}^0|<\delta n$, then let $A_0:=v_0\cup \tilde{N}_{F, \beta , 1}(v_0)$ and let $V_i^1:=V_i^0\setminus A_0$.
Next, we check $V_i^1$ -- if there still exists a vertex $v_1\in V_i^1$ such that $|\tilde{N}_{F, \beta , 1}(v_1)\cap V_i^1|<\delta n$, then let $A_1:=v_1\cup \tilde{N}_{F, \beta , 1}(v_1)$ and let $V_i^2:=V_i^1\setminus A_1$. We repeat the procedure until no such $v_j$ exists.
Suppose that we stop with a set of vertices $v_0,\dots, v_s$.
Note that every two vertices of $v_0,\dots, v_s$ are not $(F, \beta , 1)$-reachable in $V_i$, which implies $s< c$ and $|\bigcup _{0\leq j\leq s}A_j|\leq c\delta n$.
Set $S_i:=V_i\setminus \bigcup _{0\leq j\leq s}A_j $, then $|\tilde{N}_{F, \beta , 1}(v)\cap S_i|\geq \delta n$ holds for every $v\in S_i$, as required.
\end{proof}


The following lemma from~\cite{HT} can provide a partition of each part of $H$ such that every smaller part possesses a good reachable property.

\begin{lemma}{\rm{(}\cite[Theorem 6.3]{HT}\rm{)}}.
\label{partation}
Suppose that $1/n_0\ll \beta_0\ll \beta \ll \delta, 1/c, 1/m <1$ and $c,m,k, n_0 \in \mathbb{N}$ with $k\ge 3$.
 Let $F$ be a $k$-graph on $f$ vertices. Suppose that $H$ is a $k$-graph on $n_0$ vertices, and a subset $S\subseteq V(H)$ satisfies that $|\tilde{N}_{F, \beta , 1}(v)\cap S|\geq \delta n_0$ for any $v\in S$. Further, suppose every set of $c+1$ vertices in $S$ contains two vertices that are $(F, \beta , 1)$-reachable in $H$. Then we can find a partition $\mathcal{P}$ of $S$ into $W_1,\dots,W_r$ with $r \leq \min\{c,\lfloor1/{\delta}\rfloor\}$ such that for any $i \in [r]$, $|W_i|\geq (\delta -\beta )n_0$ and $W_i$ is $(F, \beta_0 , 2^{c-1})$-closed in $H$. 
\end{lemma}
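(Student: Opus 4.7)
The plan is to build the partition from the auxiliary $(F,\beta,1)$-reachability graph $R$ on $S$, in which $uv$ is an edge iff $u,v$ are $(F,\beta,1)$-reachable. By hypothesis, $R$ has minimum degree at least $\delta n_0$ and independence number at most $c$. The target partition will be obtained by grouping vertices that are connected in $R$ by short walks, and the argument reduces to (i) showing that short walks in $R$ upgrade to higher-order reachability with only a geometric loss in the $\beta$-parameter, and (ii) analysing the greedy partition produced by this short-walk closure.

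The main engine is a composition claim: if $u,v,w$ are distinct with $u,v$ being $(F,\beta',i)$-reachable and $v,w$ being $(F,\beta',j)$-reachable, then $u,w$ are $(F,(\beta')^{2}/C,i+j)$-reachable, for some constant $C=C(k,m)$. The witness is concatenation: given an $(ikm-1)$-set $W_{uv}$ reachable for $\{u,v\}$ and a $(jkm-1)$-set $W_{vw}$ reachable for $\{v,w\}$, disjoint and avoiding $\{u,v,w\}$, take $W:=W_{uv}\cup\{v\}\cup W_{vw}$, of size $(i+j)km-1$. Then $\{u\}\cup W$ carries an $F$-factor built from the $F$-factor on $\{u\}\cup W_{uv}$ and the $F$-factor on $\{v\}\cup W_{vw}$, and symmetrically $\{w\}\cup W$ does too. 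A union bound discards the overlapping pairs, leaving $(\beta')^{2}n_0^{(i+j)km-2}/C$ good witnesses. Iterating this claim $c-1$ times shows that any two vertices joined by an $R$-walk of length at most $2^{c-1}$ are $(F,\beta_0,2^{c-1})$-reachable, where $\beta_0=\beta^{2^{c-1}}/C^{2^{c-1}-1}$. Since $c,k,m$ are bounded by the hierarchy, $\beta_0\ll\beta$ holds as required.

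With the composition in hand, I would produce the partition greedily: pick $v_1\in S$, let $W_1$ consist of $v_1$ together with every vertex of $S$ lying at $R$-distance at most $2^{c-1}$ from $v_1$, remove $W_1$, then iterate with $v_2,\dots,v_r$ until $S$ is exhausted. By the composition claim each $W_i$ is $(F,\beta_0,2^{c-1})$-closed. Each representative $v_i$ has at least $\delta n_0$ immediate $R$-neighbours in $S$, all of which sit at distance $1$ in $R$ and hence lie in $W_i$, so $|W_i|\ge\delta n_0\ge(\delta-\beta)n_0$. The representatives $v_1,\dots,v_r$ are pairwise non-adjacent in $R$, because any $R$-edge between $v_i$ and $v_j$ with $j>i$ would have absorbed $v_j$ into $W_i$ at distance $1$. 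Hence $\{v_1,\dots,v_r\}$ is independent in $R$, forcing $r\le c$; the bound $r\le\lfloor 1/\delta\rfloor$ follows from summing $|W_i|\ge\delta n_0$ over $i\in[r]$.

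The main obstacle is the composition step, where one must verify that $W=W_{uv}\cup\{v\}\cup W_{vw}$ is a legal vertex set with the correct part-profile (so that the two concatenated $F$-factors really do tile it), and control the combinatorial loss coming from pairs $(W_{uv},W_{vw})$ that intersect each other or meet $\{u,v,w\}$. Once this is established, the doubly-exponential shrinkage of $\beta$ across $c-1$ rounds of composition is harmless because $c$, $k$, and $m$ are all constants fixed before $\beta_0$ in the hierarchy $1/n_0\ll\beta_0\ll\beta\ll\delta,1/c,1/m$.
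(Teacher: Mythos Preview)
The paper does not give its own proof of this lemma; it is quoted from \cite[Theorem~6.3]{HT} and used as a black box. Your proposal, however, has a genuine counting gap in the composition step. You claim that if $u,v$ are $(F,\beta',i)$-reachable and $v,w$ are $(F,\beta',j)$-reachable for a \emph{fixed} triple $u,v,w$, then $u,w$ are $(F,(\beta')^2/C,i+j)$-reachable. But your own count produces only $(\beta')^2 n_0^{(i+j)km-2}/C$ witness sets $W$, whereas $(F,(\beta')^2/C,i+j)$-reachability requires at least $((\beta')^2/C)\, n_0^{(i+j)km-1}$ such sets: the exponent is off by one. Concatenating through a \emph{single} fixed middle vertex always costs a factor of $n_0$, and after $c-1$ iterations the shortfall is polynomial in $n_0$ and cannot be absorbed into any constant $\beta_0$. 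The correct form of the composition needs $\Omega(n_0)$ choices of middle vertex---if there are $\delta' n_0$ vertices $z$ with $u$ $(F,\beta',i)$-reachable to $z$ and $z$ $(F,\beta',j)$-reachable to $w$, summing over $z$ restores the missing factor of $n_0$. This is exactly where the hypothesis $|\tilde N_{F,\beta,1}(v)\cap S|\ge\delta n_0$ must enter, and your argument never uses it inside the composition. The proof in \cite{HT} organises the partition so that at every merge one can exhibit linearly many intermediaries; a partition based purely on $R$-distance from a centre does not supply this.

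Two secondary problems also appear. For $i\ge 2$, some of $v_i$'s $R$-neighbours may already lie in $W_1\cup\cdots\cup W_{i-1}$, so your lower bound $|W_i|\ge\delta n_0$ is unjustified as written. And two vertices of $W_i$ each at $R$-distance $2^{c-1}$ from $v_i$ can be at mutual $R$-distance up to $2^{c}$, so even granting your composition you would obtain only $(F,\beta_0,2^{c})$-closedness rather than the $(F,\beta_0,2^{c-1})$-closedness the lemma asserts.
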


In order to prove the whole $S_i$ in Lemma~\ref{S-closed} is closed, we need the following lemma from~\cite{Han2017Minimum} which gives a sufficient condition to merge different closed parts. Before stating the lemma formally, we introduce the following concepts from Keevash and Mycroft \cite{Ke2015}.  Let $H$ be a $k$-partite $k$-graph with each part having $n$ vertices. Suppose that $\mathcal{P} =\{W_0,W_1,\dots,W_r\}$ is a partition of $V (H)$ with $r\geq k$ which is a refinement of the original $k$-partition of $V(H)$. Let $F$ be a $k$-partite $k$-graph with $f$ vertices. For a subset $S\subseteq V(H)$, the \textit{index vector} of $S$ with respect to $\mathcal{P}$ is the vector 
$$\mathbf{i}_{\mathcal{P}}(S):=(|S\cap W_1|,\dots ,|S\cap W_r|)\in \mathbb{Z}^r.$$
Given a vector $\mathbf{i}\in \mathbb{Z}^r$, we use $\mathbf{i}|_{W_j}$ to denote the coordinate which corresponds to $W_j$. We call a vector $\mathbf{i}\in \mathbb{Z}^r$ an \emph{$s$-vector} if all its coordinates are non-negative and their sum is $s$. Given $\lambda>0$, an $f$-vector $\mathbf{v} \in \mathbb{Z}^r$ is called a \emph{$\lambda$-robust $F$-vector} if at least $\lambda n^f$ copies $F'$ of $F$ in $H$ satisfy $\mathbf{i}_{\mathcal{P}}(V(F'))=\mathbf{v}$. Let $I^{\lambda}_{\mathcal{P},F}(H)$ be the set of all $\lambda$-robust $F$-vectors and $L^{\lambda}_{\mathcal{P},F}(H)$ be the lattice generated by the vectors of $I^{\lambda}_{\mathcal{P},F}(H)$. Let $\mathbf{u}_{W_j}\in \mathbb{Z}^r$ be the {\emph{unit vector}} such that $\mathbf{u}_{W_j}|_{W_j}=1$ and $\mathbf{u}_{W_j}|_{W_i}=0$ for $i\ne j$. 

The following lemma is actually a variant of \cite[\rm Lemma 3.9]{Han2017Minimum} and can be derived directly from the the proof of \cite[\rm Lemma 3.9]{Han2017Minimum}.

\begin{lemma}{\rm{(}\cite[\rm Lemma 3.9]{Han2017Minimum}\rm{)}.}
\label{V-closed}
Let $i_0, k, r,f> 0$ be integers and let $F$ be a $k$-graph with $f:=v(F)$. Given constants $\zeta, \beta_0 , \lambda > 0$, there exists $\beta_0'>0$ and integers $i'_0$ such that the following holds for sufficiently large $n_0$. Let $H$ be a $k$-graph on $n_0$ vertices with a partition $\mathcal{P} =\{W_0,W_1,\dots,W_r\}$ of $V (H)$ such that for each $j\in [r]$, $|W_j|\geq \zeta n_0$ and $W_j$ is $(F,\beta_0 , i_0)$-closed in $H$. If $\mathbf{u}_{W_j} -\mathbf{u}_{W_l} \in  L^{\lambda}_{\mathcal{P},F}(H)$ where $1 \leq  j <l \leq r$, then $W_j\cup W_l$ is $(F, \beta'_0, i'_0)$-closed in $H$. 
\end{lemma}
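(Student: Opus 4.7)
My plan is to adapt the proof of \cite[\rm Lemma 3.9]{Han2017Minimum} to the $k$-partite setting. Fix any $u\in W_j$ and $v\in W_l$; I will show they are $(F,\beta_0',i_0')$-reachable by constructing $\beta_0' n^{i_0'f-1}$ witnessing sets.

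First, decompose the lattice vector: write $\mathbf{u}_{W_j}-\mathbf{u}_{W_l}=\sum_{t=1}^T a_t\mathbf{v}_t$ with $a_t\in\mathbb{Z}$, $\mathbf{v}_t\in I^{\lambda}_{\mathcal{P},F}(H)$, and set $b:=\sum_t|a_t|$. Grouping positive and negative terms yields $\mathbf{v}^\pm$ with the key identity $\mathbf{v}^++\mathbf{u}_{W_l}=\mathbf{v}^-+\mathbf{u}_{W_j}$, which forces $\mathbf{v}^+|_{W_j}\geq 1$ and $\mathbf{v}^-|_{W_l}\geq 1$. Using the $\lambda$-robustness of each $\mathbf{v}_t$, I greedily select two vertex-disjoint families $\mathcal{F}^+,\mathcal{F}^-$ of $p$ copies of $F$ each (with $pf$ the common coordinate sum of $\mathbf{v}^\pm$) realizing the vectors $\mathbf{v}^\pm$, disjoint from $\{u,v\}$; each pick has $\Omega(n^f)$ choices, giving $\Omega(n^{bf})$ admissible families. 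Then I pick $w_1\in V(\mathcal{F}^+)\cap W_j$ and, using the $(F,\beta_0,i_0)$-closedness of $W_j$, select a reachable set $R\subseteq W_j\setminus(V(\mathcal{F}^+)\cup V(\mathcal{F}^-)\cup\{v\})$ of size $i_0 f-1$ for $\{u,w_1\}$. Finally set $W:=V(\mathcal{F}^+)\cup V(\mathcal{F}^-)\cup R$, so $|W|=(2p+i_0)f-1$ and $i_0':=2p+i_0$.

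Then $H[W\cup\{u\}]$ admits an $F$-factor built from $\mathcal{F}^+$, $\mathcal{F}^-$, and the factor on $R\cup\{u\}$ given by reachability of $\{u,w_1\}$ in $W_j$. For $H[W\cup\{v\}]$, I combine the factor on $R\cup\{w_1\}$ (also from $W_j$-reachability) with an $F$-factor on the residual set $V(\mathcal{F}^+)\cup V(\mathcal{F}^-)\cup\{v\}\setminus\{w_1\}$. This residual has $2pf$ vertices and, by the identity above, index vector $\mathbf{v}^++\mathbf{v}^-+\mathbf{u}_{W_l}-\mathbf{u}_{W_j}=2\mathbf{v}^-$; the $\lambda$-robustness of $\mathbf{v}^-$, together with the $(F,\beta_0,i_0)$-closedness of $W_l$ (which permits local reshuffling near $v$), produces the required factor, which is essentially the content of Han's induction on the lattice depth. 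Counting the admissible $(\mathcal{F}^\pm,R)$ yields $\Omega(\lambda^b\beta_0 n^{i_0'f-1})$ reachable sets, so we may take $\beta_0':=\lambda^b\beta_0/2$.

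The main obstacle lies precisely in that last step: producing an $F$-factor on the residual set $V(\mathcal{F}^+)\cup V(\mathcal{F}^-)\cup\{v\}\setminus\{w_1\}$. Matching index vectors (forced by the lattice identity) is necessary but far from sufficient, so one must blend $\lambda$-robustness with $W_l$-closedness to reshuffle the copies of $\mathcal{F}^+$ and absorb $v$ while preserving vertex-disjointness. This is the technical heart of Han's proof, and it transfers with only cosmetic changes to the $k$-partite setting because the partition $\mathcal{P}$ refines the original $k$-partition, so all index vectors, reachable sets, and copies of $F$ respect this refinement automatically.
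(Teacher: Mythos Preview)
The paper does not supply its own proof of this lemma; it simply records that the statement is a variant of \cite[Lemma~3.9]{Han2017Minimum} and can be read off from Han's argument. So the comparison is against Han's proof, and there your proposal has a real gap, precisely at the point you yourself flag.

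Your set $W=V(\mathcal{F}^+)\cup V(\mathcal{F}^-)\cup R$ with a \emph{single} reachable set $R$ for $\{u,w_1\}$ cannot work. On the $v$-side you are left with the residual $(V(\mathcal{F}^+)\setminus\{w_1\})\cup V(\mathcal{F}^-)\cup\{v\}$, and knowing its index vector equals $2\mathbf{v}^-$ says nothing about an $F$-factor: robustness of $\mathbf{v}^-$ counts copies of $F$ with that index vector in all of $H$, not on a prescribed $2pf$-vertex set, and closedness of $W_l$ only lets you swap pairs \emph{inside} $W_l$, whereas almost all residual vertices lie in other parts. The phrases ``reshuffling'' and ``Han's induction on the lattice depth'' do not constitute an argument, and your claimed parameters $i_0'=2p+i_0$, $\beta_0'=\lambda^{b}\beta_0/2$ are incompatible with any such induction.

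The actual mechanism is to use many reachable sets, one per vertex to be swapped. After picking $\mathcal{F}^{\pm}$, choose also $w_2\in V(\mathcal{F}^-)\cap W_l$. Since
\[
\mathbf{i}\bigl((V(\mathcal{F}^-)\setminus\{w_2\})\cup\{u\}\bigr)=\mathbf{v}^- -\mathbf{u}_{W_l}+\mathbf{u}_{W_j}=\mathbf{v}^+=\mathbf{i}\bigl(V(\mathcal{F}^+)\bigr),
\]
there is a partition-preserving bijection $\psi:(V(\mathcal{F}^-)\setminus\{w_2\})\cup\{u\}\to V(\mathcal{F}^+)$. For \emph{every} $y$ in the domain the pair $\{y,\psi(y)\}$ lies in a common closed part $W_i$, so pick a reachable set $R_y$; also pick $R_v$ for $\{v,w_2\}\subseteq W_l$. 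With $W:=V(\mathcal{F}^+)\cup V(\mathcal{F}^-)\cup R_v\cup\bigcup_y R_y$ one checks directly that $H[W\cup\{u\}]$ has the $F$-factor $\bigcup_y(R_y\cup\{y\})\;\cup\;(R_v\cup\{w_2\})\;\cup\;\mathcal{F}^+$, while $H[W\cup\{v\}]$ has the $F$-factor $\bigcup_y(R_y\cup\{\psi(y)\})\;\cup\;(R_v\cup\{v\})\;\cup\;\mathcal{F}^-$. This gives $i_0'$ of order $p\,i_0 f$ and $\beta_0'$ of order $\lambda^{2p}\beta_0^{\,pf+1}$, not the values you stated.
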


To use Lemma~\ref{V-closed}, we prove the following lemma, which verifies the conditions in  Lemma~\ref{V-closed}.

\begin{lemma}\label{transferral}
    Suppose that $1/n \ll \mu \ll \lambda \ll \zeta, \varepsilon<1$ and $m,k,n\in \mathbb{N}$ with $k\geq 3$. Let $F$ be a $k$-partite $k$-graph with $m$ vertices in each part. Suppose that $H$ is a $(\frac{1}{2}+\varepsilon, \mu)$-dense $k$-partite $k$-graph with vertex partition $V_1\cup \cdots \cup V_k$ and each part having $n$ vertices. Let $\mathcal{P}_i =\{W_i^0, W_i^1,\dots,W_{i}^{r_i}\}$ be a partition of $V_i$, and let $\mathcal{P}:=\bigcup _{i\in [k]}\mathcal{P}_i$ be a refinement of the original $k$-partition of $V(H)$. Assume that for every $i\in [k]$ and $j\in [r_i]$, $|W_i^{j}|\geq \zeta n$. Then for any $i\in [k]$ and $j_1, j_2\in [r_i]$, we have $\mathbf{u}_{W_i^{j_1}} -\mathbf{u}_{W_i^{j_2}} \in  L^{\lambda}_{\mathcal{P},F}(H)$.
\end{lemma}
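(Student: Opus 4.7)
The plan is to reduce to $F = K_k(m)$ and then, for fixed indices $i,j_1,j_2$, exhibit two $\lambda$-robust $K_k(m)$-vectors whose difference is exactly $\mathbf{u}_{W_i^{j_1}}-\mathbf{u}_{W_i^{j_2}}$. For the reduction, observe that $F$ is a subgraph of $K_k(m)$ with matching part sizes, so each vertex set $V'\subseteq V(H)$ supporting a $K_k(m)$-copy also supports $(m!)^k/|\mathrm{Aut}(F)|$ copies of $F$ with the same index vector; thus every $\lambda$-robust $K_k(m)$-vector is also a $\lambda'$-robust $F$-vector for some $\lambda'>0$, and it suffices to work with $K_k(m)$. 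Fix $i\in[k]$ and distinct $j_1,j_2\in[r_i]$, and write $W:=W_i^{j_1}$, $W':=W_i^{j_2}$, each of size at least $\zeta n$.

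The key step is to count copies in $H$ of the auxiliary graph $\widetilde F := K_k(m+1,m,\dots,m)$ (with $m+1$ vertices in part $i$) whose $m+1$ part-$i$ vertices meet both $W$ and $W'$; each such ``mixed'' copy yields, by discarding a part-$i$ vertex lying in $W$ (or in $W'$), a pair of $K_k(m)$-copies sharing $f-1$ vertices and whose index vectors differ by exactly $\mathbf{u}_W-\mathbf{u}_{W'}$. I claim that the number of mixed $\widetilde F$-copies is at least $\rho n^{f+1}$ for some $\rho=\rho(\varepsilon,\zeta)>0$. The base case $m=1$ follows directly from the $(p,\mu)$-density condition: $e_H(V_1,\dots,W,\dots,V_k)\ge(1/2+\varepsilon)|W|n^{k-1}-\mu n^k$ and its analogue for $W'$ imply, via a Markov-type clean-up, that at least $\varepsilon n^{k-1}$ legal $(k-1)$-tuples $T\in V_1\times\cdots\times V_{i-1}\times V_{i+1}\times\cdots\times V_k$ satisfy $\min(|N(T)\cap W|,|N(T)\cap W'|)\ge\eta n$, each contributing $\ge\eta^2 n^2$ copies of $K_k(2,1,\dots,1)$ with a $W$-$W'$ split. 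For $m\ge 2$, enumerate $m$-subsets $U_j\subseteq V_j$ for each $j\ne i$, set $R:=\bigcap_T N(T)$ over the $m^{k-1}$ legal $(k-1)$-tuples drawn from the $U_j$'s, and note that the number of mixed $\widetilde F$-copies sitting on these $U_j$'s equals $|R\cap W|\cdot|R\cap W'|\cdot\binom{|R|-2}{m-1}$. For random $(U_j)$, Proposition~\ref{super} applied inside the $(k-1)$-graph link of each fixed $y$ (which for all but $O(\mu/\varepsilon)n$ vertices $y\in V_i$ has density at least $1/2+\varepsilon/2$) gives $\Pr[y\in R]\ge c(\varepsilon,m,k)>0$, so $\mathbb{E}[|R\cap W|]\ge c\zeta n$; an inclusion-exclusion / second-moment estimate combined with the $m=1$ inequality within each link upgrades this to an $\Omega(n^{m+1})$ lower bound on $\mathbb{E}[|R\cap W|\cdot|R\cap W'|\cdot\binom{|R|}{m-1}]$, yielding the desired $\rho n^{f+1}$ count.

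With the mixed count in hand, apply pigeonhole on the index vector $\mathbf{c}\in\mathbb{Z}^r$ of the $f-1$ shared vertices of swap pairs; since only $C=C(k,m,r)$ such vectors exist, some $\mathbf{c}$ carries at least $\rho n^{f+1}/C$ mixed copies, which in turn produces at least $\lambda n^f$ copies of $K_k(m)$ with index vector $\mathbf{c}+\mathbf{u}_W$ and at least $\lambda n^f$ with $\mathbf{c}+\mathbf{u}_{W'}$ (choosing $\lambda$ small relative to $\rho/C$, consistent with $\mu\ll\lambda\ll\zeta,\varepsilon$). Hence both $\mathbf{v}_1:=\mathbf{c}+\mathbf{u}_W$ and $\mathbf{v}_2:=\mathbf{c}+\mathbf{u}_{W'}$ lie in $I^{\lambda}_{\mathcal{P},F}(H)$, and their difference $\mathbf{u}_{W_i^{j_1}}-\mathbf{u}_{W_i^{j_2}}$ lies in $L^{\lambda}_{\mathcal{P},F}(H)$, as required. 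The main obstacle is the joint lower bound on the product $|R\cap W|\cdot|R\cap W'|$ for a positive fraction of $m$-assignments when $m\ge 2$: intersecting $m^{k-1}$ neighborhoods can annihilate both slices simultaneously in adversarial configurations, and the threshold $p>1/2$ is precisely what rules out the Theorem~\ref{cons1}-style construction in which the neighborhoods of $W$ and $W'$ are made disjoint—at $p=1/2$ the intersection $R$ can meet at most one of $W,W'$, whereas at $p>1/2$ enough ``mixing'' is forced to guarantee simultaneous large slices.
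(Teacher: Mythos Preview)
Your approach is genuinely different from the paper's: you attempt an elementary argument via supersaturation and link analysis, while the paper applies the hypergraph regularity lemma, passes to the reduced hypergraph $\mathcal{A}$, and uses the key observation that both constituents $\mathcal{A}_{\mathcal{X}}$ (with $X_1\subseteq W_i^{j_1}$) and $\mathcal{A}_{\mathcal{Y}}$ (with $X_{k+1}\subseteq W_i^{j_2}$) have edge density exceeding $1/2$, so more than half the vertices in the shared class $\mathcal{P}^{\mathcal{X}\setminus\{1\}}$ have positive degree in each; a common such vertex is a polyad with dense regular extensions into both $W_i^{j_1}$ and $W_i^{j_2}$, and the counting lemma then produces the robust vector $\mathbf v$ directly. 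This works uniformly in $m$ without any case distinction.

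Your $m=1$ argument is correct (the averaging $\mathrm{Avg}_T(a_T+b_T)>1+\varepsilon$ with $a_T=|N(T)\cap W|/|W|$, $b_T=|N(T)\cap W'|/|W'|$ does force many $T$ with both slices large), and the parenthetical about link densities is also fine once one applies denseness to the bad set $B=\{y:d(L_y)<1/2+\varepsilon/2\}$. However, the $m\ge 2$ step has a genuine gap. Knowing $\mathbb{E}[|R\cap W|]$ is linear and that most individual links are globally dense does \emph{not} give control on $\mathbb{E}\big[|R\cap W|\cdot|R\cap W'|\cdot\binom{|R|}{m-1}\big]$: the links $L_w$ for $w\in W$ and $L_{w'}$ for $w'\in W'$ could a priori be arranged so that every $m^{k-1}$-fold intersection $R$ meets at most one of $W,W'$. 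A ``second moment'' of $|R\cap W|$ would require $\Pr[y,y'\in R]\approx\Pr[y\in R]\Pr[y'\in R]$, which you have no reason to expect; and ``the $m=1$ inequality within each link'' cannot be invoked, since individual links $L_y$ are only shown to have high \emph{global} edge density, not $(1/2+\varepsilon)$-denseness in the sense of Definition~\ref{def.}.

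That said, your plan can be rescued without regularity, but via a different computation than you indicate. From the $m=1$ step one gets $\sum_T |N(T)\cap W|^{m-1}|N(T)\cap W'|\ge c\,n^{k-1+m}$ (summing over the good $T$'s). Rewriting this as $\sum_{(w_1,\dots,w_{m-1},w')}|E(L_{w_1}\cap\cdots\cap L_{w_{m-1}}\cap L_{w'})|$ and averaging shows that $\Omega(n^m)$ ordered tuples $(w_1,\dots,w_{m-1},w')\in W^{m-1}\times W'$ have common link of density bounded below by a constant; supersaturation then yields $\Omega(n^{(k-1)m})$ copies of $K_{k-1}(m)$ in each such common link, hence $\Omega(n^{km})$ copies of $K_k(m)$ with exactly the index vector $\mathbf v$. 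Restricting all $T$'s to $W_2^{1}\times\cdots\times W_k^{1}$ from the outset gives the specific vector the paper obtains. This is the missing idea; your ``second-moment/within-link'' hint does not supply it.
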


\begin{proof}[Proof of Lemma~\ref{transferral}]
    Without loss of generality, we prove the lemma for $i=1$, $j_1=1$ and $j_2=2$. We choose parameters satisfying the following hierarchy:
    $$1/n\ll \mu \ll \lambda \ll \lambda_1,\lambda_2 \ll \zeta ,\varepsilon, 1/m, 1/k.$$
    Set $r':=\sum _{i\in [k]}r_i$. Let $\mathbf{u}\in \mathbb{Z}^{r'}$ be the $(km)$-vector where $\mathbf{u}|_{W_j^1}=m$ for $j\in [k]$ and other coordinates are zero. Let $\mathbf{v}\in \mathbb{Z}^{r'}$ be the $(km)$-vector where $\mathbf{v}|_{W_1^1}=m-1$, $\mathbf{v}|_{W_1^2}=1$, $\mathbf{v}|_{W_j^1}=m$ for $j=2,\cdots,k$ and remaining coordinates are zero. Note that $\mathbf{u}_{W_1^{1}} -\mathbf{u}_{W_1^{2}}=\mathbf{u}-\mathbf{v}$.
    
Let $H_1:=H[W_1^1\cup \cdots \cup W_k^1]$ and $H_2:=H[W^2_1\cup W_2^1\cup \cdots \cup W_k^1]$ be the induced $k$-partite subgraphs of $H$. Since $H$ is $(\frac{1}{2}+\varepsilon,\mu)$-dense, we have 
    $$e(H_1)=e_H(W_1^1,W_2^1,\dots ,W_k^1)\geq (\frac{1}{2}+\varepsilon)|W_1^1|\cdots |W_k^1|-\mu (kn)^k\geq \frac{1}{2}\zeta^kn^k.$$
By Proposition~\ref{super} on $H_1$, there exists a constant $\lambda_1$ such that there are at least $\lambda_1n^{km}$ copies of $F$ with $j$-th part embedded in $W_j^1$ for $j\in [k]$. This implies that $\mathbf{u}\in I^{\lambda}_{\mathcal{P},F}(H)$ as $\lambda\ll \lambda_1$.

It suffices to prove $\mathbf{v}\in I^{\lambda}_{\mathcal{P},F}(H)$, then we have $\mathbf{u}_{W_1^{1}} -\mathbf{u}_{W_1^{2}}=\mathbf{u}-\mathbf{v}\in L^{\lambda}_{\mathcal{P},F}(H)$ as desired. Let $X\subseteq W_1^1$ be the set of vertices $v$ satisfying 
$$\deg_{H_1}(v)\ge (\frac{1}{2}+\frac{\varepsilon}{2})|W_2^1|\cdots |W_k^1|.$$
Similarly, let $Y\subseteq W_1^2$ be the set of vertices $u$ satisfying 
$$\deg_{H_2}(u)\ge (\frac{1}{2}+\frac{\varepsilon}{2})|W_2^1|\cdots |W_k^1|.$$ 
Then we note that for every $x\in X$ and $y\in Y$, the following holds
\begin{equation}\label{eq:intersect}
|N_{H_1}(x)\cap N_{H_2}(y)|\ge \varepsilon |W_2^1|\cdots |W_k^1|.   
\end{equation}
We claim that $|X|\ge \frac{\varepsilon}{2}|W_1^1|$. Indeed, we have 
\[
e(H_1)=e_H(W_1^1,W_2^1,\dots ,W_k^1)\le |X||W_2^1|\cdots |W_k^1|+(\frac{1}{2}+\frac{\varepsilon}{2})(|W_1^1|-|X|)|W_2^1|\cdots |W_k^1|.
\]
On the other hand, by $(\frac{1}{2}+\varepsilon,\mu)$-denseness, $e(H_1)$ is at least 
$$(\frac{1}{2}+\varepsilon)|W_1^1|\cdots |W_k^1|-\mu (kn)^k$$
By double counting and using the fact that every $|W_i^j|\ge \zeta n$ , we derive that
\begin{equation}\label{eq:X-size}
|X|\ge \varepsilon|W_1^1|-\frac{2k^k\mu}{\zeta ^k}\cdot \zeta n\ge \frac{\varepsilon}{2}|W_1^1|.    
\end{equation}
Also, $|Y|\ge \frac{\varepsilon}{2}|W_1^2|$ follows from a similar argument.

Fix a vertex $y\in Y$. Let $H_y$ be the $k$-partite $k$-graph on the vertex set $V(H_y):=X\cup W_2^1\cup\cdots \cup W_k^1$, where $E(H_y)$ consists of all legal $k$-sets $S$ such that $S\in E(H_1)$ and $S$ contains an element of $N_{H_2}(y)$ as a subset. Then, by~(\ref{eq:intersect}) and~(\ref{eq:X-size}), we obtain
\[
e(H_y)=\sum_{x\in X}\deg_{H_y}(x)=\sum_{x\in X}|N_{H_1}(x)\cap N_{H_2}(y)|\ge \varepsilon |X||W_2^1|\cdots |W_k^1|\ge \frac{\varepsilon^2}{2}|W_1^1|\cdots |W_k^1|.
\]
Let $F'$ be the $k$-partite $k$-graph on $km-1$ vertices obtained from $F$ by removing an arbitrary fixed vertex in the first part. By Proposition~\ref{super} on $H_y$, there exists a constant $\lambda_2$ such that there are at least $\lambda_2n^{km-1}$ copies of $F'$ with the first part of $F'$ embedded in $X$ and $j$-th part embedded in $W_j^2$ for $2\le j\le k$. Together with $y\in W_1^2$, each copy of $F'$ in $H_y$ gives a copy $F''$ of $F$ in $H$ satisfying $\mathbf{i}_{\mathcal{P}}(V(F''))=\mathbf{v}$. Since $|Y|\ge \frac{\varepsilon}{2}|W_1^2|$, we would get at least
\[
|Y|\cdot \lambda_2n^{km-1}\ge \frac{\varepsilon}{2}|W_1^2|\cdot \lambda_2n^{km-1}\ge \frac{\varepsilon}{2}\zeta \lambda_2n^{km}\ge \lambda n^{km}
\]
such copies of $F$ in total, which implies $\mathbf{v}\in I^{\lambda}_{\mathcal{P},F}(H)$.

\end{proof}

Given a $k$-partite $k$-graph $F$ with $m$ vertices in each part, a $(km)$-set $S$ and $a\in \mathbb{N}$, we say an $a$-set $A\subseteq V(H)$ is an {\emph{absorbing $a$-set}} for $S$ if $A\cap S=\varnothing$ and both $H[A]$ and $H[A\cup S]$ have $F$-factors. Let $\mathcal{A}_a(S)$ be the family of all absorbing $a$-sets for $S$.
The proofs of two absorbing lemmas depend on the following lemma whose proof idea is similar to the non-multipartite version from~\cite[Lemma 3.6]{Han2017Minimum}.

\begin{lemma} \label{absorb}
Suppose that $1/n \ll \gamma' \ll \eta , \beta_0 ' \ll 1/k, 1/m$, and $i'_0, k,m\in \mathbb{N}$ with $k\geq 3$. Let $F$ be a $k$-partite $k$-graph with $m$ vertices in each part.
Suppose that $H$ is a $k$-partite $k$-graph with vertex partition $V_1\cup \cdots \cup V_k$ and each part has $n$ vertices. Furthermore, $H$ satisfies the following two properties:
\begin{enumerate}[label=$(\roman*)$]
  \item[{\rm (i)}] For every $v\in V(H)$, $v$ is contained in at least $\eta n^{km-1}$ copies of $F$;
  \item[{\rm (ii)}]  For each $V_i$, there exists $V_i^0\subseteq V_i$ with $|V^0_i|\leq \eta^2n$ such that $V_i\setminus V^0_i$ is $(F, \beta'_0, i'_0)$-closed in $H$.
\end{enumerate}
Then there exists a balanced vertex set $W$ with $(\bigcup_{i\in [k]}V_i^0)\subseteq W\subseteq V(H)$ and $|W|\leq  \eta n$ such that for any balanced vertex set $U \subseteq V (H)\setminus W$ with $|U| \leq \gamma' n$ and $|U| \in km\mathbb{N}$, both $H[W]$ and $H[U \cup W]$ contain $F$-factors. 
\end{lemma}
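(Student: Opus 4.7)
The plan is to adapt the probabilistic absorbing framework of Han~\cite{Han2017Minimum} to the $k$-partite setting, with the refinement that the bad vertices $\bigcup_i V_i^0$ are pre-absorbed separately. First, I greedily pick a copy of $F$ through each vertex of $\bigcup_i V_i^0$ avoiding previously chosen vertices, which is possible by condition~(i); since $|\bigcup_i V_i^0|\le k\eta^2 n$, the resulting balanced set $W_0$ has size $O(\eta^2 n)$ and already admits an $F$-factor. It then suffices to find inside $V(H)\setminus W_0$ a family $\mathcal{F}^*$ of pairwise disjoint \emph{absorbers}, each of some fixed size $a$, such that for every balanced $(km)$-set $S$ in $V(H)\setminus W_0$ we have $|\mathcal{F}^*\cap \mathcal{A}_a(S)|\ge \gamma' n$.

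The key combinatorial step is the absorber construction. Given such an $S=\{s_1,\ldots,s_{km}\}$, I pick an ordered copy $T=\{t_1,\ldots,t_{km}\}$ of $F$ in $V(H)\setminus(S\cup W_0)$ with $s_j,t_j$ lying in the same part, and for each pair $(s_j,t_j)$ (both lying in some $V_i\setminus V_i^0$, which is $(F,\beta_0',i_0')$-closed by condition~(ii)) a reachable set $R_j$ of size $i_0'km-1$ for which both $H[R_j\cup\{s_j\}]$ and $H[R_j\cup\{t_j\}]$ admit $F$-factors. Each $R_j$ is chosen greedily disjoint from $T\cup S\cup W_0$ and from all previous $R_{j'}$; condition~(ii) guarantees $\beta_0'n^{i_0'km-1}$ candidates at each step and only a negligible fraction is forbidden. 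Setting $A:=T\cup R_1\cup\cdots\cup R_{km}$, a part-by-part count shows $A$ is balanced of size $a:=(km)^2 i_0'$. An $F$-factor on $H[A]$ is obtained by concatenating the $F$-factors of $H[R_j\cup\{t_j\}]$ for $j\in[km]$, disjoint because the $R_j$ are disjoint and the $t_j$ are distinct; an $F$-factor on $H[A\cup S]$ is obtained by taking $T$ as a single $F$-copy and appending the $F$-factors of $H[R_j\cup\{s_j\}]$ for $j\in[km]$. Dividing the ordered count by $a!$ yields $|\mathcal{A}_a(S)|\ge c\,n^a$ for some $c=c(\eta,\beta_0',k,m,i_0')>0$.

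I then select each $a$-subset of $V(H)\setminus W_0$ independently with probability $p=Cn^{1-a}$ for a small constant $C=C(\eta,c,k,a)$. Chernoff together with a union bound over the at most $(kn)^{km}$ choices of $S$ give that with positive probability $|\mathcal{F}|=O(pn^a)=O(n)$ and $|\mathcal{F}\cap\mathcal{A}_a(S)|\ge pcn^a/2$ for every $S$; the expected number of intersecting pairs in $\mathcal{F}$ is $O(p^2 n^{2a-1})=O(n)$, which after deletion leaves a pairwise disjoint $\mathcal{F}^*$ with $|\mathcal{F}^*\cap\mathcal{A}_a(S)|\ge \gamma' n$ for every $S$ (here $\gamma'\ll \eta$ is crucial). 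Setting $W:=W_0\cup\bigcup_{A\in\mathcal{F}^*} A$, the set $W$ is balanced with $|W|\le \eta n$, and $H[W]$ is covered by the $F$-factors on $W_0$ and on each $A\in\mathcal{F}^*$. For any balanced $U\subseteq V(H)\setminus W$ with $|U|\in km\mathbb{N}$ and $|U|\le \gamma' n$, balancedness together with $km\mid |U|$ forces $m\mid |U\cap V_i|$, so $U$ partitions into $t\le \gamma' n/(km)$ balanced $(km)$-sets $S_1,\ldots,S_t$. For each $S_j$ in turn I pick a fresh absorber $A_j\in\mathcal{F}^*\cap\mathcal{A}_a(S_j)$ (available since $\gamma' n>t$) and use the $F$-factor on $A_j\cup S_j$; the unused absorbers and $W_0$ contribute $F$-factors on their own, giving an $F$-factor of $H[W\cup U]$.

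The main obstacle is the absorber construction itself: engineering one gadget $A$ so that $H[A]$ and $H[A\cup S]$ are simultaneously $F$-factorable. The alignment of $(s_j,t_j)$ by part, the choice $a=(km)^2 i_0'$, and the two explicit $F$-factorings via $T$ together with the reachable sets $R_j$ are precisely what makes this possible. Everything else -- the greedy pre-absorption of $\bigcup_i V_i^0$, the random selection of $\mathcal{F}$, and the deletion to obtain $\mathcal{F}^*$ -- is routine once the absorber lower bound $|\mathcal{A}_a(S)|\ge cn^a$ is in hand.
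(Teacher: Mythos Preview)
Your proof is essentially the paper's argument, with two cosmetic differences: you pre-absorb $\bigcup_i V_i^0$ into $W_0$ before building the random family (the paper does this afterwards, calling it $\mathcal{F}_2$), and your absorber uses a copy $T$ of $F$ disjoint from $S$ together with $km$ reachable sets, whereas the paper takes a copy $S'$ of $F$ through one designated vertex of $S$ and hence needs only $km-1$ reachable sets (so their $a=i_0'km(km-1)$ versus your $a=i_0'(km)^2$). Both variants are fine.

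There is one small slip in your cleanup. You form $\mathcal{F}$ by sampling \emph{all} $a$-subsets of $V(H)\setminus W_0$ and then delete one member from each intersecting pair to get $\mathcal{F}^*$; but an arbitrary $a$-subset need not be balanced and need not span an $F$-factor, so your subsequent claims that $W=W_0\cup\bigcup_{A\in\mathcal{F}^*}A$ is balanced and that $H[W]$ has an $F$-factor are not yet justified. The fix is the obvious one (and is exactly what the paper does): in addition to removing intersecting pairs, discard from $\mathcal{F}$ every $a$-set that is not an absorbing set for at least one balanced $(km)$-set $S\subseteq V(H)\setminus W_0$. The surviving sets are then balanced and each carries an $F$-factor by definition of $\mathcal{A}_a(S)$, and the bound $|\mathcal{F}^*\cap\mathcal{A}_a(S)|\ge \gamma' n$ is unaffected since you only remove sets outside every $\mathcal{A}_a(S)$. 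With this one-line amendment your argument is complete.
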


\begin{proof}
    The strategy is as follows. We first show that for any balanced $(km)$-set $S$ in $V(H)\setminus (\bigcup_{i\in [k]}V_i^0)$, there is a robust number of absorbing sets for $S$, which allows us to build an absorbing family $\mathcal{F}_1$ randomly. Then we cover vertices in $(\bigcup_{i\in [k]}V_i^0)\setminus V(\mathcal{F}_1)$ greedily into a family $\mathcal{F}_2$ of copies of $F$. The union $V(\mathcal{F}_1\cup \mathcal{F}_2)$ is the absorbing set we desire.

    Fix a balanced $(km)$-set $S\subseteq V(H)\setminus (\bigcup_{i\in [k]}V_i^0)$. Assume that $S=\bigcup_{i\in [k]}\{v_i^1,\dots ,v_i^m\}$ such that for each $i\in [k]$, $\{v_i^1,\dots ,v_i^m\}$ are $m$ vertices lying in $V_i$. Set $a:=i'_0km(km-1)$ and $\eta_1:=\frac{\eta}{2}\cdot (\frac{\beta'_0}{2})^{km-1}$. We claim that there are at least $\eta_1n^{a}$ absorbing $a$-sets for $S$, namely $|\mathcal{A}_a(S)|\geq \eta_1n^{a}$. First consider a  balanced $(km)$-set $S':=\bigcup_{i\in [k]}\{u_i^1,\dots ,u_i^m\}\subseteq V(H)\setminus (\bigcup_{i\in [k]}V_i^0)$ with $u_1^1=v_1^1$ such that $S\cap S'=v^1_1$ and $S'$ has a copy of $F$. By the property (i) in the statement and each $|V^0_i|\leq \eta^2n$, there are at least 
    $$\eta n^{km-1}-(km-1)n^{km-2}-(k\eta^2n)n^{km-2}\geq \frac{\eta}{2}n^{km-1}$$
    choices of such $S'$, where $(km-1)n^{km-2}$ is the number of $(km)$-sets overlapping one more vertex with $S$ and $(k\eta^2n)n^{km-2}$ is the number of $(km)$-sets sharing some vertex with $\bigcup_{i\in [k]}V_i^0$. For any distinct vertex pair $\{v_i^j, u_i^j\}$ with $i\in [k]$ and $j\in [m]$, they are $(F, \beta'_0, i'_0)$-reachable in $H$ by the property (ii). Therefore there are at least $\beta'_0n^{i'_0km-1}$ reachable $(i'_0km-1)$-sets $S_i^j$ for $\{v_i^j, u_i^j\}$. We aim to find disjoint $S_j^i$ greedily for all  $i\in [k], j\in [m]$ except $i=j=1$ such that $S_j^i$ is also disjoint from $S$ and $S'$. During the selection, there are at most $(2km-1)+(km-1)(i'_0km-1)$ vertices to avoid in each step. Thus there are at least $\beta'_0n^{i'_0km-1}/2$ choices for each $S_i^j$. Let $A$ be the union of all such $S_i^j$ together with $S'\setminus v_1^1$. We claim that $A$ is an absorbing $a$-set for $S$. For $H[A]$, each $S_i^j$ forms an $F$-factor with $u_i^j$ by the reachable property, so $H[A]$ has an $F$-factor. For $H[S\cup A]$, each $S_i^j$ forms an $F$-factor with $v_j^i$ and $S'$ has a copy of $F$, which together give an $F$-factor in $H[S\cup A]$. In total, the number of such absorbing sets $A$ is at least
    $$\frac{\eta}{2}n^{km-1}\cdot (\frac{\beta'_0}{2}n^{i'_0km-1})^{km-1}\geq \eta_1n^a,$$
    namely $|\mathcal{A}_a(S)|\geq \eta_1n^{a}$.

    We next build a family $\mathcal{F}_1$ of balanced $a$-sets randomly. Choose a family $\mathcal{F}$ of balanced $a$-sets by selecting $\binom{n}{a/k}^k$ possible balanced $a$-sets independently with probability $p=\eta_1n^{1-a}/(8a)$. By Chernoff's bound and the union bound, with probability $1-o(1)$ as $n\rightarrow \infty$, the $\mathcal{F}$ satisfies
    \begin{equation}
        |\mathcal{F}|\leq 2p\binom{n}{a/k}^k\leq \frac{\eta_1}{4a}n\quad {\rm and}\quad |\mathcal{A}_a(S)\cap \mathcal{F}|\geq \frac{p}{2}|\mathcal{A}_a(S)|\geq \frac{\eta_1^2}{16a}n \tag{a}
    \end{equation}
    for all balanced $(km)$-set $S$ in $V(H)\setminus (\bigcup_{i\in [k]}V_i^0)$.

    The expected number of pairs of intersecting balanced $a$-sets in $\mathcal{F}$ is at most
    
    $$    \binom{n}{a/k}^k\cdot a \cdot \binom{n}{a/k-1}\binom{n}{a/k}^{k-1}\cdot p^2\leq \frac{\eta_1^2}{64a}n.$$
    Therefore, by Markov's inequality, with probability at least $1/2$, 
    \begin{equation}
        \mathcal{F}\ {\rm has}\ {\rm at}\ {\rm most}\ \frac{\eta_1^2}{32a}n\ {\rm pairs}\ {\rm of}\ {\rm intersecting}\ {\rm balanced}\ a{\text {-sets}}.\tag{b}
    \end{equation}
 Thus there exists a family $\mathcal{F}$ satisfying both (a) and (b). We obtain a subfamily $\mathcal{F}_1$ by removing one balanced $a$-set from intersecting pairs and also removing those $a$-sets which are not absorbing $a$-sets for any balanced $(km)$-set $S$ in $V(H)\setminus (\bigcup_{i\in [k]}V_i^0)$. Then $|V(\mathcal{F}_1)|\leq a|\mathcal{F}_1|\leq a|\mathcal{F}|\leq \eta_1n/4$ and $H[V(\mathcal{F}_1)]$ has an $F$-factor. For any balanced $(km)$-set $S$ in $V(H)\setminus (\bigcup_{i\in [k]}V_i^0)$, we get
 $$|\mathcal{A}_a(S)\cap \mathcal{F}_1|\geq \frac{\eta_1^2}{16a}n-\frac{\eta_1^2}{32a}n\geq \frac{\eta_1^2}{32a}n.$$
 For any balanced vertex set $U \subseteq V (H)\setminus (\bigcup_{i\in [k]}V_i^0\cup V(\mathcal{F}_1))$ with $|U| \leq \gamma' n$ and $|U| \in km\mathbb{N}$, we split $U$ arbitrarily into at most $\gamma'n/(km)$ balanced $(km)$-sets. Since $\eta_1^2n/(32a)\geq \gamma'n/(km)$, for all such balanced $(km)$-sets, we can find disjoint absorbing $a$-sets, which means that $H[U\cup V(\mathcal{F}_1)]$ has an $F$-factor.

 We then greedily pick disjoint copies of $F$ covering vertices in $(\bigcup_{i\in [k]}V_i^0)\setminus V(\mathcal{F}_1)$,  and we denote by $\mathcal{F}_2$ the family of such copies of $F$. Our aim is to avoid vertices belonging to $V(\mathcal{F}_1)$ in this process. For any vertex $v_0\in (\bigcup_{i\in [k]}V_i^0)\setminus V(\mathcal{F}_1)$, there are at least $\eta n^{km-1}$ copies of $F$ containing $v_0$ by the property (i) in the assumption. Furthermore, there are at most $k\eta^2n\cdot km+\eta_1n/4$ vertices to avoid in each step. Thus, we can always find the desired copy of $F$ one by one and finally obtain $\mathcal{F}_2$, since $(k\eta^2n\cdot km+\eta_1n/4)n^{km-2}< \eta n^{km-1}$.

 Let $W=V(\mathcal{F}_1)\cup V(\mathcal{F}_2)$, and $W$ is the desired balanced vertex set with $|W|\leq k\eta^2n\cdot km+\eta_1n/4\leq \eta n$.

\end{proof}

\subsection{Proof of Lemma~\ref{Abs1} and Lemma~\ref{Abs2}}
We are ready to prove Lemma~\ref{Abs1} and Lemma~\ref{Abs2}.

\begin{proof}[Proof of Lemma~\ref{Abs1}]
    We choose parameters in the following hierarchy:
    $$1/n \ll \mu \ll \gamma' \ll \beta'_0 \ll \beta_0, \lambda\ll \zeta, \beta \ll \delta \ll \eta, \gamma \ll \varepsilon ,\alpha ,1/m ,1/k$$
    and $m, n, k\in \mathbb{N}$ with $k\geq 3$. Let $F$ be a $k$-partite $k$-graph with each part having $m$ vertices, and let $H$ be a $(\frac{1}{2}+\varepsilon, \mu)$-dense $k$-partite $k$-graph with each part having $n$ vertices such that $\delta'_{k-1}(H)\geq \alpha n$ and $n\in m\mathbb{N}$ as in the statement. Let the vertex partition of $H$ be $V_1\cup \cdots \cup V_k$. By Lemma~\ref{cover}, every vertex $v\in V(H)$ is contained in at least $\eta n^{km-1}$ copies of $F$. Then by Lemma~\ref{reachable}, for any $i\in [k]$, every set of $\lfloor 1/\eta \rfloor+1$ vertices in $V_i$ contains two vertices that are $(F,\beta ,1)$-reachable in $H$. Set $c:=\lfloor 1/\eta \rfloor$. By Lemma~\ref{S-closed}, for each $i\in [k]$, there exists $S_i\subseteq V_i$ with $|S_i|\geq (1-c\delta )n$ such that $|\tilde{N}_{F, \beta , 1}(v)\cap S_i|\geq \delta n$ for any $v\in S_i$. We then apply Lemma~\ref{partation} to each $S_i$ with $n_0=kn$ and $\delta/k$ in place of $\delta$, and obtain a partition $\{W_i^1,\dots,W_{i}^{r_i}\}$ of $S_i$. Let $\mathcal{P}_i =\{W_i^0, W_i^1,\dots,W_{i}^{r_i}\}$ be the partition of $V_i$ where $W_i^0=V_i\setminus S_i$. Set $\mathcal{P}:=\bigcup _{i\in [k]}\mathcal{P}_i$, which is a refinement of the original $k$-partition of $V(H)$. By Lemma~\ref{transferral}, for any $i\in [k]$ and $j_1, j_2\in [r_i]$, we have $\mathbf{u}_{W_i^{j_1}} -\mathbf{u}_{W_i^{j_2}} \in  L^{\lambda}_{\mathcal{P},F}(H)$. Therefore, following Lemma~\ref{V-closed} with $\mathcal{P}$, we conclude that each $S_i$ is $(F, \beta'_0, i'_0)$-closed. We end with Lemma~\ref{absorb} where $V_i^0=W_i^0=V_i\setminus S_i$, and eventually find the desired set $W$ which possesses good absorbing property as in the statement.
    
\end{proof}

The proof of Lemma~\ref{Abs2} is simpler, where we verify the reachable property in a direct way.
\begin{proof}[Proof of Lemma~\ref{Abs2}]
    The parameters have the following hierarchy:
    $$1/n \ll \gamma' \ll \gamma, \beta, \eta \ll \epsilon, 1/m, 1/k$$
    and $m, n, k\in \mathbb{N}$ with $k\geq 3$. Let $F$ be a $k$-partite $k$-graph with $m$ vertices in  each part, and let $H$ be a $k$-partite $k$-graph with $n$ vertices in each part such that $\delta'_{k-1}(H)\geq (\frac{1}{2}+\varepsilon) n$ and $n\in m\mathbb{N}$. Let the vertex partition of $H$ be $V_1\cup \cdots \cup V_k$. By Lemma~\ref{cover} with $(\frac{1}{2}+\varepsilon)$ in place of $\alpha$, every vertex $v\in V(H)$ is contained in at least $\eta n^{km-1}$ copies of $F$. Thus the property (i) in the Lemma~\ref{absorb} is satisfied. It is sufficient to show that each $V_i$ is $(F,\beta ,1)$-closed. If so, Lemma~\ref{Abs2} follows from Lemma~\ref{absorb} with $i'_0=1$ and $\beta$ in place of $\beta'_0$.

    Without loss of generality, we prove the property for $V_1$. For arbitrary two vertices $u,v\in V_1$, since $\delta'_{k-1}(H)\geq (1/2+\varepsilon) n$, we have $\deg(u), \deg(v)\geq n^{k-2}\cdot (1/2+\varepsilon) n=(1/2+\varepsilon)n^{k-1}$, which implies that $|N(u)\cap N(v)|\geq \varepsilon n^{k-1}$. We construct an auxiliary $k$-partite $k$-graph $H'$ as follows. Let the vertex set $V(H'):=(V_1\setminus\{u, v\})\cup V_2\cup \cdots \cup V_k$, and let the edge set $E(H'):=\{e\in E(H): u,v\notin e \ {\text{and}}\ \exists S\in N(u)\cap N(v)\ {\text{such that}}\ S\subseteq e\}$. Namely we retain edges of $E(H)$ which do not cover $u$ nor $v$ but contain some element of $N(u)\cap N(v)$ as a subset. Since $\delta'_{k-1}(H)\geq (1/2+\varepsilon) n$, we have $|E(H')|\geq \varepsilon n^{k-1}\cdot ((1/2+\varepsilon)n-2)\geq \varepsilon ^2 n^{k}/4$. Let $K'$ be a $k$-partite $k$-graph obtained from $K_k(m)$ with one arbitrary vertex $u'$ removed from the first part. Then, by Proposition~\ref{super}, there exists some small $\beta $ such that $H'$ contains at least $\beta n^{km-1}$ copies of $K'$. By our construction, for each such copy $K''$, both $V(K'')\cup u$ and $V(K'')\cup v$ span copies of $K_k(m)$ with $u'$ embedded to $u$ and $v$ respectively, hence in turn span $\beta n^{km-1}$ copies of $F$, as $F$ is a subgraph of $K_k(m)$.  This means that there are at least $\beta n^{km-1}$ $(km-1)$-sets $W$ such that both $H[u\cup W]$ and $H[v\cup W]$ contain $F$-factors. Therefore, $u,v$ are $(F,\beta ,1)$-reachable, and $V_1$ is $(F,\beta ,1)$-closed.


\end{proof}


\section{Proofs of Theorem~\ref{main} and Theorem~\ref{degree}}

In this section, we provide two almost perfect $F$-tiling lemmas for Theorem~\ref{main} and Theorem~\ref{degree} respectively. These two lemmas, together with the absorbing lemmas, will complete the proofs of Theorem~\ref{main} and Theorem~\ref{degree}. We split them into two separate subsections.

\subsection{Proof of Theorem~\ref{main}}
\begin{lemma}[Almost Perfect Tiling I]\label{Almost-factor}
Suppose that $1/n \ll \mu \ll p,\omega <1$ and $f,k,n\in \mathbb{N}$. Let $F$ be a $k$-partite $k$-graph with $|V(F)|=f$. Suppose that $H$ is a $(p,\mu)$-dense $k$-partite $k$-graph with each part having $n$ vertices. Then there exists an $F$-tiling that covers all but at most $\omega n$ vertices in each part of $H$.
\end{lemma}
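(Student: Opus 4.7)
The main obstacle is that $F$ need not be balanced across its parts: if $f_j$ denotes the size of the $j$-th part of $F$, then each embedding of $F$ consumes $f_j$ vertices of a specified part of $H$, so a naive greedy extraction in one fixed orientation would exhaust different parts of $H$ at different rates. I overcome this via a \emph{permutation trick}. For each $\sigma \in S_k$, let $F^{\sigma}$ denote the $k$-partite $k$-graph $F$ with its parts relabelled by $\sigma$; an embedding of $F^{\sigma}$ into $H$ places the $j$-th part of $F$ inside $V_{\sigma(j)}$ and uses $f_{\sigma^{-1}(i)}$ vertices from $V_i$. Crucially, one copy of $F^{\sigma}$ for every $\sigma \in S_k$ consumes exactly $\sum_{\sigma \in S_k}f_{\sigma^{-1}(i)} = (k-1)!\,f$ vertices from each part $V_i$, independent of $i$, so cycling through the permutations balances the load across the parts of $H$.

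Set $N_0 := \lfloor (1-\omega/2)n/((k-1)!\,f)\rfloor$ and perform $k!\,N_0$ rounds of the following greedy procedure: in round $t$, with $\sigma_t$ the $t$-th permutation in a fixed cyclic order on $S_k$, find a vertex-disjoint copy of $F^{\sigma_t}$ inside the uncovered portion of $H$ and add it to the tiling. The cyclic schedule guarantees that after $t$ rounds each part of $H$ has lost at most $\lceil t/k!\rceil (k-1)!\,f + O(f)$ vertices, which for $t \leq k!\,N_0$ is at most $(1-\omega/2)n + O(1)$; hence each uncovered part has size at least $(\omega/3)n$ throughout. By $(p,\mu)$-denseness together with $\mu \ll p,\omega$, the uncovered subgraph contains at least
\[
p(\omega/3)^k n^k - \mu n^k \geq \tfrac{p}{2}(\omega/3)^k n^k
\]
edges, so Proposition~\ref{super} (applied to the uncovered $k$-partite subgraph with a small constant $p'$ depending on $p,\omega,k$) yields $\Omega(n^f)$ copies of $F^{\sigma_t}$ in the required orientation. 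Thus a copy can always be picked and the greedy never gets stuck.

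After all $k!\,N_0$ rounds the symmetric schedule has removed exactly $N_0 (k-1)!\,f \geq (1-\omega)n - 1$ vertices from every part, so the leftover per part is at most $\omega n$ for $n$ sufficiently large, as desired. The principal technical point is the feasibility of the greedy step, which reduces to verifying that the uncovered subgraph retains the necessary edge count to invoke the supersaturation proposition; the hierarchy $\mu \ll p,\omega$ is precisely what makes this work, while the permutation trick handles the asymmetry of $F$.
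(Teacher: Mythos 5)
Your proof is correct, and it follows the same high-level strategy as the paper's — greedy extraction, with the greedy step justified by $(p,\mu)$-denseness plus the supersaturation Proposition~\ref{super}. The difference is in how the two arguments handle a potential imbalance across parts. The paper's proof simply says: as long as every part of the uncovered subgraph has at least $\omega n$ vertices, supersaturation yields a further copy of $F$, so ``greedily pick copies until at most $\omega n$ vertices in each part remain.'' This is airtight when $F$ is balanced (each extraction removes the same number of vertices from every part, so all parts shrink in lockstep and the stopping condition is simultaneous across parts), and balanced $F$ is the only case the paper actually uses in the proof of Theorem~\ref{main}. You observed, correctly, that for unbalanced $F$ a greedy in a fixed orientation would deplete one part much faster than the others, so the stopping condition ``every part has $\leq \omega n$ left'' would never be reached before the greedy stalls. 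Your permutation trick — cycling the orientation through $S_k$ so that each full cycle of $k!$ rounds removes exactly $(k-1)!\,f$ vertices from every part — is a clean fix, and it makes the lemma hold exactly as stated, without the implicit balancedness assumption. (As a small simplification, cycling through just the $k$ cyclic shifts of $[k]$ also balances the load, since each $k$-round cycle removes $\sum_j f_j = f$ from every part; you don't need all $k!$ permutations.) Your bookkeeping of the leftover sizes and the invocation of denseness to keep the greedy step feasible are both sound; the only cosmetic point is that the error term in the denseness condition is $\mu(kn)^k$ rather than $\mu n^k$ when $H$ has $kn$ vertices, but this is absorbed by $\mu \ll p,\omega$ exactly as you claim. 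In short: you take the same route as the paper but make it robust to unbalanced $F$, which the paper treats implicitly (and defers to ``minor adjustments'' in its concluding remarks).
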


\begin{proof}
    For any induced $k$-partite subgraph $H'$ of $H$ with each part having at least $\omega n$ vertices, we have $|E(H')|\geq p\omega^k n^k-\mu (kn)^k$ since $H$ is $(p,\mu)$-dense. By Proposition~\ref{super}, $H'$ contains at least one copy of $F$. Hence, we greedily pick vertex-disjoint copies of $F$ from $H$ until at most $\omega n$ vertices in each part are left.
\end{proof}

\begin{proof}[Proof of Theorem~\ref{main}]
    Suppose that $1/n\ll \mu \ll \gamma' \ll \gamma \ll \varepsilon,\alpha<1$ and $m,k,n\in \mathbb{N}$ with $k\geq 3$. Let $F$ be a $k$-partite $k$-graph with $m$ vertices in each part, and let $H$ be a $(\frac{1}{2}+\varepsilon, \mu)$-dense $k$-partite $k$-graph with $n$ vertices in each part such that $\delta'_{k-1}(H)\geq \alpha n$ and $n\in m\mathbb{N}$. By Lemma~\ref{Abs1}, there exists a balanced vertex set $W \subseteq V (H)$ with $|W| \leq \gamma n$ such that for any balanced vertex set $U\subseteq V(H)\setminus W$ with $|U|\leq \gamma' n$ and $|U|\in km\mathbb{N}$, both $H[W]$ and $H[W \cup U]$ contain $F$-factors. Let $H':=H[V(H)\setminus W]$ be the induced subgraph of $H$. Note that $H'$ is $(\frac{1}{2}+\varepsilon,\mu_1)$-dense $k$-parite $k$-graph for some $\mu_1$, since $$\mu (kn)^k= \mu\cdot \big(\frac{k}{k-\gamma}\big)^k\cdot (k-\gamma)^kn^k\leq \mu \cdot (\frac{k}{k-\gamma})^k|V(H')|^k=\mu_1|V(H')|^k.$$
    Applying Lemma~\ref{Almost-factor} on $H'$ with $\omega =\gamma '/k$, we obtain an $F$-tiling that covers all but a balanced set $U$ of at most $\gamma' n$ vertices. By the absorbing property of $W$, $H[W\cup U]$ contains $F$-factor, which gives an $F$-factor of $H$.

\end{proof}

\subsection{Proof of Theorem~\ref{degree}}


\begin{lemma}[Almost Perfect Tiling II]\label{almost2}
    Suppose that $1/n \ll \omega ,\varepsilon<1$ and $m,k,n\in \mathbb{N}$. Let $F$ be a $k$-partite $k$-graph with $m$ vertices in each part. Suppose that a $k$-partite $k$-graph $H$ with each part having $n$ vertices satisfies $\delta'_{k-1}(H)\geq (\frac{1}{2}+\varepsilon)n$. Then there exists an $F$-tiling covering all but at most $\omega n$ vertices in each part of $H$.
\end{lemma}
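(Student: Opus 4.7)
The plan is to prove Lemma~\ref{almost2} via the weak hypergraph regularity method (to be developed in Section~6), followed by finding an almost perfect matching in an auxiliary reduced $k$-partite $k$-graph, and then performing a greedy tiling within each matching edge using Proposition~\ref{super}. The overall strategy follows the standard regularity-based tiling template.

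First, I would apply the weak regularity lemma to $H$ to obtain an equitable partition of each part $V_i$ into clusters $V_{i,1}, \dots, V_{i,t}$ such that all but an $\eta$-fraction of the $k$-tuples $(V_{1,j_1}, \dots, V_{k,j_k})$ induce $\eta$-regular $k$-partite subgraphs, where $\eta \ll d \ll \varepsilon, \omega$. Define the reduced $k$-partite $k$-graph $R$ on the $kt$ clusters by declaring a $k$-tuple of clusters (one from each part) to be an edge of $R$ exactly when its induced subgraph of $H$ is $\eta$-regular with density at least $d$. A standard averaging argument on the codegree bound $\delta'_{k-1}(H) \geq (1/2+\varepsilon)n$ then shows that, after discarding a small set of exceptional clusters that participate in too many irregular or low-density $k$-tuples, $R$ satisfies $\delta'_{k-1}(R) \geq (1/2 + \varepsilon/2)t$. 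Applying a known almost perfect matching theorem for $k$-partite $k$-graphs with partite codegree exceeding $t/2$ (Aharoni's theorem for $k = 3$, and analogous results for general $k$) produces an almost perfect matching $M$ of $R$ leaving at most $o(t)$ unmatched clusters per part.

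For each matching edge $(V_{1,j_1}, \dots, V_{k,j_k}) \in M$, the induced $k$-partite subgraph is $\eta$-regular of density at least $d$. So long as each cluster retains at least $\eta(n/t)$ vertices, $\eta$-regularity guarantees density at least $d/2$, and Proposition~\ref{super} then supplies $\Omega((n/t)^{km})$ copies of $F$ inside the surviving subgraph. Hence one can greedily build an $F$-tiling within each matching edge that leaves at most $\eta(n/t)$ vertices per cluster uncovered. Summing these $F$-tilings across all of $M$, and accounting for the at most $o(n)$ vertices in the unmatched or exceptional clusters, produces an $F$-tiling of $H$ covering all but $\omega n$ vertices per part, provided $\eta$ and the matching error are chosen sufficiently small.

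The main obstacle is the quantitative transfer of the codegree condition from $H$ to $R$: the loss due to irregular $k$-tuples, low-density $k$-tuples, and the deletion of exceptional clusters must be controlled to stay strictly below $\varepsilon/2$, so that $\delta'_{k-1}(R) > t/2$ and the matching theorem applies. A secondary technical point is invoking the correct partite matching theorem: for $k = 3$ Aharoni's theorem suffices, while for general $k$ one needs an analogous codegree threshold for almost perfect matchings in $k$-partite $k$-graphs.
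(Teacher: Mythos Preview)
Your overall strategy matches the paper's: apply weak hypergraph regularity, show the reduced $k$-partite $k$-graph $\mathcal{R}$ inherits a codegree condition, find an almost perfect matching in $\mathcal{R}$, and then tile greedily inside each matching edge via Proposition~\ref{super}. The greedy step is exactly Fact~\ref{greedy} in the paper.

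Two points where your execution diverges from the paper and would need repair. First, you propose to delete a small set of exceptional clusters so that \emph{every} remaining legal $(k-1)$-set in $\mathcal{R}$ has codegree at least $(1/2+\varepsilon/2)t$. This is not achievable by cluster deletion alone: the at most $\varepsilon^* t^k$ irregular $k$-tuples need not concentrate on a small set of clusters, and after any such deletion some $(k-1)$-sets can still have up to $\sqrt{\varepsilon^*}\,t$ irregular completions. The paper instead proves (Proposition~\ref{inherit}) that all but at most $\xi t^{k-1}$ legal $(k-1)$-sets satisfy $\deg_{\mathcal{R}}(S)\geq (1/2+\varepsilon/4)t$, and then invokes a matching theorem that tolerates a bounded number of low-codegree $(k-1)$-sets rather than requiring a uniform bound.

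Second, the matching input is not Aharoni's theorem. The paper uses a result of K\"uhn and Osthus (Theorem~\ref{almostPM}): if fewer than $t_0^{k-1}$ legal $(k-1)$-sets have $\deg_R(S)<\delta'$ with $\delta'\approx t/k$, then $R$ has a matching missing at most $(k-1)t_0-1$ vertices per part. Note the required codegree is only about $t/k$, far below the $t/2$ you aim for, and the statement is designed precisely to absorb the exceptional $(k-1)$-sets left over from the regularity step. Your reference to Aharoni (and unspecified analogues for $k\geq 4$) does not supply this robustness, so as written the proposal has a gap at the matching step.
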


The proof of Lemma~\ref{almost2} depends on the so-called {\emph{weak hypergraph regularity lemma}}, a straightforward generalization of Szemer\'edi regularity lemma for graphs.

\subsubsection{Weak regularity lemma for hypergraphs}

Let $H$ be a $k$-graph. Given $k$ pairwise disjoint subsets $A_1\dots ,A_k\subseteq V(H)$, we define the {\emph{density}} of $H$ with respect to $(A_1,\dots,A_k)$ as
$$d_H(A_1,\dots ,A_k)=\frac{e_H(A_1,\dots ,A_k)}{|A_1|\cdots |A_k|}.$$
Given $\varepsilon>0$ and $d\geq 0$, a $k$-tuple $(V_1,\dots ,V_k)$ of mutually disjoint subsets $V_1,\dots ,V_k\subseteq V(H)$ is called $(\varepsilon,d)$-{\emph{regular}}, if for all $A_i\subseteq V_i$ with $|A_i|\geq \varepsilon|V_i|$, $i\in [k]$, we have 
$$|d_H(A_1,\dots, A_k)-d|\leq \varepsilon.$$
We say a $k$-tuple $(V_1,\dots ,V_k)$ is $\varepsilon$-{\emph{regular}} if it is $(\varepsilon,d)$-regular for some $d\geq 0$.

A straightforward extension of the graph regularity lemma is given as follows, which was proved by Chung~\cite{weakc}.

\begin{lemma}[Weak regularity lemma for hypergraphs]\label{weak-reg}
For all integers $k\geq 2$ and $t_0\geq 1$, and every $\varepsilon>0$, there exists $T_0$ and $n_0$ such that the following holds. For every $k$-uniform hypergraph $H$ on $n\geq n_0$ vertices, there exists a constant $t\in \mathbb{N}$ with $t_0\leq t\leq T_0$ and a partition $V(H)=V_0\cup V_1\cup \cdots \cup V_t$ such that 

$({\rm i})$ $|V_1|=|V_2|=\cdots =|V_t|$ and $|V_0|\leq \varepsilon n$,

$({\rm ii})$ for all but at most $\varepsilon t^k$ sets $\{i_1,\dots ,i_k\}\in \binom{[t]}{k}$, the $k$-tuple $(V_{i_1},\dots ,V_{i_k})$ is $\varepsilon$-regular.

\end{lemma}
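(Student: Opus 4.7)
The plan is to mimic Szemer\'edi's energy-increment proof of the graph regularity lemma, now run on unordered $k$-tuples of parts. For any partition $\mathcal{P}\colon V(H) = V_0 \cup V_1 \cup \cdots \cup V_t$, I would define the index
\[
\mathrm{ind}(\mathcal{P}) = \sum_{\{i_1,\ldots,i_k\}\in \binom{[t]}{k}} \frac{|V_{i_1}|\cdots|V_{i_k}|}{n^k}\, d_H(V_{i_1},\ldots,V_{i_k})^2,
\]
which satisfies $0 \leq \mathrm{ind}(\mathcal{P}) \leq 1$ since each density lies in $[0,1]$. The strategy is standard: start from an arbitrary equitable partition with at least $t_0$ parts; if the current partition already satisfies condition (ii) we are done; otherwise refine it so that the index strictly increases by a fixed amount depending only on $\varepsilon$ and $k$; iterate. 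Since the index is bounded by $1$, the process must terminate after at most a polynomial-in-$\varepsilon^{-1}$ number of steps, and the number of parts across all iterations can be bounded by some $T_0 = T_0(\varepsilon, t_0, k)$.

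The crucial technical step is the defect Cauchy--Schwarz / Jensen inequality for $k$-tuples: if $(V_{i_1},\ldots,V_{i_k})$ is not $\varepsilon$-regular, witnessed by subsets $A_{i_j}\subseteq V_{i_j}$ with $|A_{i_j}|\geq \varepsilon|V_{i_j}|$ and
\[
\bigl|d_H(A_{i_1},\ldots,A_{i_k}) - d_H(V_{i_1},\ldots,V_{i_k})\bigr| > \varepsilon,
\]
then splitting each $V_{i_j}$ into $A_{i_j}$ and $V_{i_j}\setminus A_{i_j}$ strictly increases the index contribution of this $k$-tuple by at least some polynomial in $\varepsilon$ (it suffices to get a gain like $\varepsilon^{k+2}\cdot |V_{i_1}|\cdots|V_{i_k}|/n^k$; the exact exponent is immaterial). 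Summing this gain over the at least $\varepsilon t^k$ irregular $k$-tuples and then taking the common refinement of all witness sets yields an overall index increment at least some fixed $c(\varepsilon, k) > 0$ per iteration.

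After the refinement, to restore equitability I would subdivide every refined class into blocks of a common small size and absorb the overflow vertices into $V_0$. Because the number of refinement classes depends only on $\varepsilon$, $k$, and the current $t$, for $n$ large enough the overflow can be kept within $\varepsilon n$ and the parts $V_1,\ldots,V_{t'}$ made of equal size. Iterating at most $\lceil c(\varepsilon,k)^{-1}\rceil$ times produces the desired partition and the bound $T_0$.

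The main obstacle is the simultaneous refinement bookkeeping: many irregular $k$-tuples may share vertex classes, so one cannot refine them independently. The standard fix is to take the common refinement of all witness partitions at once, and argue by convexity of $x \mapsto x^2$ (applied one coordinate at a time, i.e.\ a $k$-fold application of Jensen's inequality) that each per-tuple gain survives inside the common refinement. A secondary subtlety is controlling the growth of $|V_0|$ across iterations while keeping $|V_1|=\cdots=|V_t|$; this is handled by choosing the initial $t_0$ slightly larger than the nominal lower bound so there is room to equipartition the refined classes at every step.
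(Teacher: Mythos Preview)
The paper does not give its own proof of this lemma; it simply quotes it as a known result of Chung~\cite{weakc} (``A straightforward extension of the graph regularity lemma\ldots which was proved by Chung''), so there is no in-paper argument to compare against. Your proposal is the standard energy-increment proof, which is precisely the approach in Chung's original paper and in subsequent expositions, and the outline you give is correct: define a mean-square energy over unordered $k$-tuples of parts, use a defect form of Cauchy--Schwarz (equivalently, convexity of $x\mapsto x^2$ applied coordinatewise) to show that refining along witnesses of irregularity increases the energy by a fixed $c(\varepsilon,k)>0$, take a common refinement to handle overlaps, and re-equitise after each step. One small inaccuracy: the number of \emph{iterations} is bounded by $c(\varepsilon,k)^{-1}$, but the bound $T_0$ on the number of parts is not polynomial in $\varepsilon^{-1}$---each refinement step can multiply the number of parts by roughly $2^{k\binom{t}{k-1}}$, so $T_0$ is a tower-type function of $\varepsilon^{-1}$ (just as in the graph case). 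This does not affect the validity of the argument, only the phrasing of the bound.
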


A partition as given in the Lemma~\ref{weak-reg} is called an $\varepsilon$-{\emph{regular partition}} of $H$. We call $V_1,\dots ,V_t$ in the above lemma {\emph{clusters}}. For an $\varepsilon$-regular partition $\mathcal{P}=\{V_0,V_1,\dots ,V_t\}$ of $H$, the {\emph{cluster hypergraph}} $\mathcal{R}=\mathcal{R}(\varepsilon ,d)$ is defined with vertex set $[t]$ and a $k$-tuple $\{i_1,\dots, i_k\}\in \binom{[t]}{k}$ forming an edge if and only if $(V_{i_1},\dots,V_{i_k})$ is $\varepsilon$-regular and $d_H(V_{i_1},\dots,V_{i_k})\geq d$.

\subsubsection{Proof of Lemma~\ref{almost2}}
\begin{proof}[Proof of Lemma~\ref{almost2}]
We pick constants with the following hierarchy:
$$1/n \ll \xi', \xi, \varepsilon_0, \varepsilon^*\ll d\ll \omega, \varepsilon, 1/m, 1/k.$$
Let $H$ be a $k$-partite $k$-graph with each part having $n$ vertices such that $\delta'_{k-1}(H)\geq (\frac{1}{2}+\varepsilon)n$. Note that an $\varepsilon$-regular partition can be obtained by iterated refinements starting with an arbitrary initial partition of $H$. Applying Lemma~\ref{weak-reg} on $H$ with $\varepsilon_0$ and the original partition $V(H)=V_1\cup \cdots \cup V_k$, we obtain the given $\varepsilon_0$-regular partition $\mathcal{P}=V'_0\cup (\bigcup _{i\in [k]}\mathcal{P}_i)$, where $\mathcal{P}_i$ partitions each $V_i$ into $t_i$ clusters for every $i\in [k]$. Assume that each cluster except $V'_0$ has $m_0$ vertices. By removing at most $k^2\varepsilon_0 n/m_0$ clusters into $V'_0$ if necessary, we can obtain a new partition $\mathcal{P}'=V_0\cup (\bigcup _{i\in [k]}\mathcal{P}'_i)$ where $\mathcal{P}'_i\subseteq \mathcal{P}_i$ splits each $V_i$ into exactly $t$ clusters with $t=\min\{t_1,\dots,t_k\}$. By choosing $\varepsilon_0$ small enough, $\mathcal{P}'$ is a $(k\varepsilon_0)$-regular partition of $H$. Set $\varepsilon^*:=k\varepsilon_0$, and let $\mathcal{R}=\mathcal{R}(\varepsilon^*,d)$ be the corresponding cluster hypergraph. Note that $\mathcal{R}$ is a $k$-partite $k$-graph with each part having $t$ vertices.

The next proposition shows that the cluster hypergraph inherits the partite minimum codegree condition of $H$.

\begin{prop}\label{inherit}
 Suppose that $1/n \ll \xi ,\varepsilon^* \ll d\ll \varepsilon<1$. Let $H$ be a $k$-partite $k$-graph with each part having $n$ vertices such that $\delta'_{k-1}(H)\geq (\frac{1}{2}+\varepsilon)n$. Suppose that $\mathcal{P'}$ is an $\varepsilon^*$-regular partition and the corresponding cluster hypergraph $\mathcal{R}=\mathcal{R}(\varepsilon^*,d)$ is a $k$-partite $k$-graph with each part having $t$ vertices. Then the number of legal $(k-1)$-sets $S$ violating 
$$\deg_{\mathcal{R}}(S)\geq (\frac{1}{2}+\frac{\varepsilon}{4})t$$
is at most $\xi t^{k-1}$.

\end{prop}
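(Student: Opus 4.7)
The plan is the standard argument that a weakly regular partition inherits the codegree condition of the host hypergraph. By symmetry over the $k$ possible choices of which part is missing from the legal $(k-1)$-set, it suffices to bound the number of bad $S$ whose clusters lie in parts $1,\dots,k-1$ by $\xi t^{k-1}/k$, and then multiply by $k$ at the end. So I fix such an $S=(C_1,\dots,C_{k-1})$ with $C_i\in\mathcal{P}'_i$, and let $m_0$ denote the common cluster size, so that $t m_0\ge (1-\varepsilon^*)n$.

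First, I invoke the partite codegree condition: every $(v_1,\dots,v_{k-1})\in C_1\times\cdots\times C_{k-1}$ has at least $(\tfrac12+\varepsilon)n$ neighbours in $V_k$, which gives $e_H(C_1,\dots,C_{k-1},V_k)\ge (\tfrac12+\varepsilon)n\,m_0^{k-1}$. Discarding edges into the part of $V_0$ inside $V_k$ (a loss of at most $\varepsilon^* n\,m_0^{k-1}$), and using $tm_0\ge(1-\varepsilon^*)n$, one obtains
$$\sum_{D\in\mathcal{P}'_k} e_H(C_1,\dots,C_{k-1},D)\ \ge\ (\tfrac12+\varepsilon-2\varepsilon^*)\,t\,m_0^k.$$
Now I split $\mathcal{P}'_k$ into three classes $X(S)$, $Y(S)$, $Z(S)$ according as whether the $k$-tuple $(C_1,\dots,C_{k-1},D)$ is irregular, regular of density less than $d$, or regular of density at least $d$; note $Z(S)=N_{\mathcal{R}}(S)$. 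Using the trivial edge-count upper bounds $m_0^k$, $d\,m_0^k$, and $m_0^k$ on the three classes yields
$$|X(S)|+d\,|Y(S)|+|Z(S)|\ \ge\ (\tfrac12+\varepsilon-2\varepsilon^*)\,t.$$
If $|Z(S)|<(\tfrac12+\varepsilon/4)\,t$, then because $d\,|Y(S)|\le dt$ and $d,\varepsilon^*\ll\varepsilon$, I deduce $|X(S)|\ge \varepsilon t/2$.

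To finish, I sum over bad $S$ in this direction. Each irregular legal $k$-tuple of clusters contributes $1$ to exactly one such sum (the one in which the missing part is part $k$), and by Lemma~\ref{weak-reg} the total number of irregular $k$-subsets in the full $(kt)$-cluster partition is at most $\varepsilon^*(kt)^k=k^k\varepsilon^* t^k$. Therefore the number of bad $S$ in this direction is at most $2k^k\varepsilon^* t^{k-1}/\varepsilon$, and provided $\varepsilon^*$ is chosen small enough relative to $\xi$ (which is compatible with the hierarchy $\xi,\varepsilon^*\ll d\ll \varepsilon$, since we fix $\xi$ first and then $\varepsilon^*$), this is at most $\xi t^{k-1}/k$. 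Multiplying by the $k$ choices of missing part yields the claimed bound of $\xi t^{k-1}$.

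The only real subtlety is the careful bookkeeping of the exceptional set $V_0$ and of the normalisation between the per-part cluster count $t$ and the total cluster count $kt$ in applying Lemma~\ref{weak-reg}; once these conversions are made, the averaging argument above is entirely routine.
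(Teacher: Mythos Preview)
Your argument is correct and follows essentially the same route as the paper's proof. The paper explicitly decomposes $\mathcal{R}=\mathcal{D}\cap\mathcal{G}$ (dense tuples and regular tuples), shows $\deg_{\mathcal{D}}(S)\ge(\tfrac12+\tfrac{\varepsilon}{2})t$ for \emph{every} legal $(k-1)$-set $S$, and then bounds by double counting the number of $S$ with $\deg_{\mathcal{G}}(S)<(1-\sqrt{\varepsilon^*})t$; you instead handle the three classes $X(S),Y(S),Z(S)$ in one inequality and double count on the threshold $|X(S)|\ge \varepsilon t/2$, which is the same averaging idea with a slightly tighter (linear rather than square-root) dependence of $\varepsilon^*$ on $\xi$. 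The only cosmetic point is your remark about ``fixing $\xi$ first and then $\varepsilon^*$'': the paper does the reverse, setting $\xi=k^{k+1}\sqrt{\varepsilon^*}$, but since $\xi$ and $\varepsilon^*$ sit at the same level of the hierarchy either order is legitimate.
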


\begin{proof}
    Assume that $\mathcal{P}'=\{V_0, W_1, W_2,\dots ,W_{kt}\}$. The cluster hypergraph $\mathcal{R}$ can be viewed as the intersection of two $k$-partite $k$-graphs $\mathcal{D}=\mathcal{D}(d)$ and $\mathcal{G}=\mathcal{G}(\varepsilon^*)$. Both of them have the same vertex set $[kt]$ and

    $\bullet$ $\mathcal{D}(d)$ consists of all legal sets $\{i_1,\dots ,i_k\}$ with $d(W_{i_1},\dots ,W_{i_k})\geq d.$

    $\bullet$ $\mathcal{G}(\varepsilon^*)$ consists of all legal sets $\{i_1,\dots ,i_k\}$ with $(W_{i_1},\dots ,W_{i_k})$ being $\varepsilon^*$-regular.

    Given any legal $(k-1)$-set $S=\{i_1,\dots ,i_{k-1}\}\subseteq V(\mathcal{R})$, we first show that $\deg_{\mathcal{D}}(S)\geq (\frac{1}{2}+\frac{\varepsilon}{2})t$. Consider the $(k-1)$-tuple $(W_{i_1},\dots ,W_{i_{k-1}})$ corresponding to $S$ with $m_0:=|W_{i_j}|\leq n/t$. By the partite minimum codegree condition of $H$, we have the number of edges 
    $$e_H(W_{i_1},\cdots ,W_{i_{k-1}}, V(H)\setminus V_0)\geq m_0^{k-1}\cdot (\frac{1}{2}+\varepsilon-k\varepsilon^*)n.$$
    Suppose on the contrary that $\deg_{\mathcal{D}}(S)< (\frac{1}{2}+\frac{\varepsilon}{2})t$. Then,
    $$e_H(W_{i_1},\cdots ,W_{i_{k-1}}, V(H)\setminus V_0)<(\frac{1}{2}+\frac{\varepsilon}{2})tm_0^k+tdm_0^k\leq m_0^{k-1}\cdot (\frac{1}{2}+\frac{\varepsilon}{2}+d)n,$$
    which gives a contradiction, since we can select $d+k\varepsilon^*\leq \varepsilon/2$.

    On the other hand, since there are at most $k^k\varepsilon^*t^k$ irregular $k$-tuples, by double counting, the number of legal $(k-1)$-sets $S$ such that $\deg_{\mathcal{G}}(S)<(1-\sqrt{\varepsilon^*})t$ is at most

    $$\frac{k\cdot k^k\varepsilon^*t^k}{\sqrt{\varepsilon^*}t}:=\xi t^{k-1}.$$

    Since $\mathcal{R}=\mathcal{D}\cap \mathcal{G}$, for those legal $(k-1)$-sets $S$ satisfying both $\deg_{\mathcal{D}}(S)\geq (\frac{1}{2}+\frac{\varepsilon}{2})t$ and $\deg_{\mathcal{G}}(S)\geq (1-\sqrt{\varepsilon^*})t$, we have $\deg_{\mathcal{R}}(S)\geq (\frac{1}{2}+\frac{\varepsilon}{2}-\sqrt{\varepsilon^*})t$. Hence the proposition follows.

\end{proof}

The following theorem from~\cite{KuhD} guarantees the existence of  an almost perfect matching in the cluster hypergraph. 

\begin{thm}[\rm{~\cite[Theorem 11]{KuhD}}]\label{almostPM}
    Let $t_0$ be an integer and $R$ be a $k$-partite $k$-graph with each part having $t$ vertices. Put

\[
\delta':= \begin{cases}
\lceil t/k \rceil&\text{if\ } 
t\equiv 0\bmod k\text{\ or\ }t\equiv k-1\bmod k\\
\lfloor t/k \rfloor&\text{otherwise}.
\end{cases}
\]
Suppose that there are fewer than $t_0^{k-1}$ legal $(k-1)$-sets $S$ satisfying $\deg_R(S)< \delta'$. Then $R$ has a matching which covers all but at most $(k-1)t_0-1$ vertices in each part of $R$.

\end{thm}

We apply Theorem~\ref{almostPM} with the cluster hypergraph $\mathcal{R}$ and $t_0:=\lfloor \xi^{\frac{1}{k-1}}t\rfloor$ and obtain an almost perfect matching of $\mathcal{R}$ covering all but at most $\xi't$ vertices in each part, where $\xi':=(k-1)\xi^{\frac{1}{k-1}}$. Each edge of this matching corresponds to an $(\varepsilon^*,d)$-regular $k$-tuple in $H$.

The next claim says that we are able to find almost $F$-tiling in an $(\varepsilon^*,d)$-regular $k$-tuple.

\begin{claim}\label{greedy}
   Suppose that $\varepsilon^*\ll d$ and $m_0$ is sufficiently large. Suppose that $(W_1,\dots ,W_k)$ is $(\varepsilon^*,d)$-regular and each $|W_i|=m_0$ for $i\in [k]$. Then there exists an $F$-tiling on $W_1\cup \cdots \cup W_k$ covering all but at most $\varepsilon^* m_0$ vertices in each $W_i$.
\end{claim}

\begin{proof}
    Since $(W_1,\dots ,W_k)$ is $(\varepsilon^*,d)$-regular, then for every $A_i\subseteq W_i$ with $|A_i|\geq \varepsilon^*|W_i|=\varepsilon m_0$ and $i\in [k]$, we have $e_H(A_1,\dots ,A_k)\geq (d-\varepsilon^*)|A_1|\cdots |A_k|$. By Proposition~\ref{super}, $H[A_1\cup \cdots \cup A_k]$ contains at least one copy of $F$. Thus, we greedily pick vertex-disjoint copy of $F$ from $W_1\cup \cdots \cup W_k$ until at most $\varepsilon^* m_0$ vertices left in each part.
\end{proof}

Claim~\ref{greedy} implies that for every edge in the almost perfect matching of $\mathcal{R}$, there is an almost $F$-tiling in the corresponding $k$-tuple. Overall, we finally get an $F$-tiling of $H$ covering all but at most
$$k\varepsilon_0n+
\xi'tm_0+\varepsilon^*m_0t\leq (k\varepsilon_0+\xi'+\varepsilon^*)n\leq \omega n$$
vertices in each part of $H$.

\end{proof}

\subsubsection{Proof of Theorem~\ref{degree}}

We now prove Theorem~\ref{degree}.

\begin{proof}[Proof of Theorem~\ref{degree}]
    Suppose that $1/n \ll \omega ,\gamma' \ll \gamma \ll \varepsilon<1$ and $m,k,n\in \mathbb{N}$ with $k\geq 3$. Let $F$ be a $k$-partite $k$-graph with $m$ vertices in each part, and let $H$ be a $k$-partite $k$-graph with each part having $n$ vertices such that $\delta'_{k-1}(H)\geq (\frac{1}{2}+\varepsilon) n$ and $n\in m\mathbb{N}$. By Lemma~\ref{Abs2}, there exists a balanced vertex set $W \subseteq V (H)$ with $|W| \leq \gamma n$ such that for any balanced vertex set $U\subseteq V(H)\setminus W$ with $|U|\leq \gamma' n$ and $|U|\in km\mathbb{N}$, both $H[W]$ and $H[W \cup U]$ contain $F$-factors. Let $H':=H[V(H)\setminus W]$ be the induced subgraph of $H$. Note that $H'$ is a balanced $k$-parite $k$-graph with each part having at least $(1-\gamma/k)n$ vertices and
    $$\delta'_{k-1}(H')\geq (\frac{1}{2}+\varepsilon -\frac{\gamma}{k})n.$$
    Applying Lemma~\ref{almost2} on $H'$ with $\omega =\gamma '/k$ and $\varepsilon -\frac{\gamma}{k}$ in place of $\varepsilon$, we obtain an $F$-tiling that covers all but a balanced set $U$ of at most $\gamma' n$ vertices. By the absorbing property of $W$, $H[W\cup U]$ contains $F$-factor, which together give an $F$-factor of $H$.
\end{proof}

\section{Concluding Remarks}
In this paper, we focus on the density condition and partite minimum codegree condition for embedding balanced factors. Note that our arguments with minor adjustments actually apply to the non-balanced case, and we are able to obtain that the density threshold for embedding all non-balanced $k$-partite $k$-graphs is also $1/2$ under the condition that the host $k$-partite $k$-graph possesses the corresponding divisibility assumption and vanishing partite minimum codegree.

We also note that Mycroft gave the non-multipartite version of Theorem~\ref{degree} in~\cite[Theorem 1.1]{My}~(including non-balanced case) and in particular showed that the asymptotic minimum codegree threshold of balanced complete $k$-partite $k$-graphs in non-multipartite host $k$-graphs is $n/2$. Our results in Theorem~\ref{cons1} and Theorem~\ref{degree} gave the same value of asymptotic partite minimum codegree threshold of balanced complete $k$-partite $k$-graphs when the host $k$-graphs are $k$-partite, which in fact implies the threshold in non-multipartite host $k$-graphs by considering a random partition into $k$ parts.

~\

\noindent\textbf{Acknowledgements.} The author would like to thank Oleg Pikhurko for proposing this problem, also for helpful discussions and writing suggestions.

\bibliographystyle{abbrv}
\bibliography{ref}

\begin{thebibliography}{10}

\bibitem{AhG}
R.~Aharoni, A.~Georgakopoulos, and P.~Spr\"{u}ssel.
\newblock Perfect matchings in {$r$}-partite {$r$}-graphs.
\newblock {\em European J. Combin.}, 30(1):39--42, 2009.

\bibitem{Alon}
N.~Alon and J.~H. Spencer.
\newblock {\em The probabilistic method}.
\newblock Wiley Series in Discrete Mathematics and Optimization. John Wiley \&
  Sons, Inc., Hoboken, NJ, fourth edition, 2016.

\bibitem{weakc}
F.~R.~K. Chung.
\newblock Regularity lemmas for hypergraphs and quasi-randomness.
\newblock {\em Random Structures Algorithms}, 2(2):241--252, 1991.

\bibitem{Chung-quasi-random}
F.~R.~K. Chung, R.~L. Graham, and R.~M. Wilson.
\newblock Quasi-random graphs.
\newblock {\em Combinatorica}, 9(4):345--362, 1989.

\bibitem{Corr}
K.~Corradi and A.~Hajnal.
\newblock On the maximal number of independent circuits in a graph.
\newblock {\em Acta Math. Acad. Sci. Hungar.}, 14:423--439, 1963.

\bibitem{Csa}
B.~Csaba and M.~Mydlarz.
\newblock Approximate multipartite version of the {H}ajnal-{S}zemer\'{e}di
  theorem.
\newblock {\em J. Combin. Theory Ser. B}, 102(2):395--410, 2012.

\bibitem{DHSWZ22}
L.~Ding, J.~Han, S.~Sun, G.~Wang, and W.~Zhou.
\newblock {$F$}-factors in quasi-random hypergraphs.
\newblock {\em J. Lond. Math. Soc. (2)}, 106(3):1810--1843, 2022.

\bibitem{DHSWZ}
L.~Ding, J.~Han, S.~Sun, G.~Wang, and W.~Zhou.
\newblock Tiling multipartite hypergraphs in quasi-random hypergraphs.
\newblock {\em J. Combin. Theory Ser. B}, 160:36--65, 2023.

\bibitem{Erd}
P.~Erd\H{o}s.
\newblock On extremal problems of graphs and generalized graphs.
\newblock {\em Israel J. Math.}, 2:183--190, 1964.

\bibitem{FI}
E.~Fischer.
\newblock Variants of the {H}ajnal-{S}zemer\'{e}di theorem.
\newblock {\em J. Graph Theory}, 31(4):275--282, 1999.

\bibitem{Hajn}
A.~Hajnal and E.~Szemer{\'e}di.
\newblock {Proof of a conjecture of P. Erd\H{o}s}.
\newblock {\em Combinatorial theory and its applications}, 2:601--623, 1970.

\bibitem{Han}
J.~Han.
\newblock Decision problem for perfect matchings in dense {$k$}-uniform
  hypergraphs.
\newblock {\em Trans. Amer. Math. Soc.}, 369(7):5197--5218, 2017.

\bibitem{HT}
J.~Han and A.~Treglown.
\newblock The complexity of perfect matchings and packings in dense
  hypergraphs.
\newblock {\em J. Combin. Theory Ser. B}, 141:72--104, 2020.

\bibitem{Han2017Minimum}
J.~Han, C.~Zang, and Y.~Zhao.
\newblock Minimum vertex degree thresholds for tiling complete 3-partite
  3-graphs.
\newblock {\em J. Combin. Theory Ser. A}, 149:115--147, 2017.

\bibitem{HanZ}
J.~Han, C.~Zang, and Y.~Zhao.
\newblock Matchings in {$k$}-partite {$k$}-uniform hypergraphs.
\newblock {\em J. Graph Theory}, 95(1):34--58, 2020.

\bibitem{Johansson2008Factors}
A.~Johansson, J.~Kahn, and V.~Vu.
\newblock Factors in random graphs.
\newblock {\em Random Structures Algorithms}, 33(1):1--28, 2008.

\bibitem{Ke2015}
P.~Keevash and R.~Mycroft.
\newblock A geometric theory for hypergraph matching.
\newblock {\em Mem. Amer. Math. Soc.}, 233(1098):vi+95, 2015.

\bibitem{Kee}
P.~Keevash and R.~Mycroft.
\newblock A multipartite {H}ajnal-{S}zemer\'{e}di theorem.
\newblock {\em J. Combin. Theory Ser. B}, 114:187--236, 2015.

\bibitem{Koml1997Blow}
J.~Koml\'{o}s, G.~N. S\'{a}rk\"{o}zy, and E.~Szemer\'{e}di.
\newblock Blow-up lemma.
\newblock {\em Combinatorica}, 17(1):109--123, 1997.

\bibitem{KuhD}
D.~K\"{u}hn and D.~Osthus.
\newblock Matchings in hypergraphs of large minimum degree.
\newblock {\em J. Graph Theory}, 51(4):269--280, 2006.

\bibitem{LenM}
J.~Lenz and D.~Mubayi.
\newblock Perfect packings in quasirandom hypergraphs {I}.
\newblock {\em J. Combin. Theory Ser. B}, 119:155--177, 2016.

\bibitem{LoM}
A.~Lo and K.~Markstr\"{o}m.
\newblock A multipartite version of the {H}ajnal-{S}zemer\'{e}di theorem for
  graphs and hypergraphs.
\newblock {\em Combin. Probab. Comput.}, 22(1):97--111, 2013.

\bibitem{LoK2}
A.~Lo and K.~Markstr\"{o}m.
\newblock Perfect matchings in 3-partite 3-uniform hypergraphs.
\newblock {\em J. Combin. Theory Ser. A}, 127:22--57, 2014.

\bibitem{Lo2015}
A.~Lo and K.~Markstr\"{o}m.
\newblock {$F$}-factors in hypergraphs via absorption.
\newblock {\em Graphs Combin.}, 31(3):679--712, 2015.

\bibitem{MarS}
R.~Martin and E.~Szemer\'{e}di.
\newblock Quadripartite version of the {H}ajnal-{S}zemer\'{e}di theorem.
\newblock {\em Discrete Math.}, 308(19):4337--4360, 2008.

\bibitem{My}
R.~Mycroft.
\newblock Packing {$k$}-partite {$k$}-uniform hypergraphs.
\newblock {\em J. Combin. Theory Ser. A}, 138:60--132, 2016.

\bibitem{Oleg}
O.~Pikhurko.
\newblock Perfect matchings and {$K^3_4$}-tilings in hypergraphs of large
  codegree.
\newblock {\em Graphs Combin.}, 24(4):391--404, 2008.

\bibitem{RRS}
C.~Reiher, V.~R\"{o}dl, and M.~Schacht.
\newblock Embedding tetrahedra into quasirandom hypergraphs.
\newblock {\em J. Combin. Theory Ser. B}, 121:229--247, 2016.

\bibitem{Reiher_2017}
C.~Reiher, V.~R\"{o}dl, and M.~Schacht.
\newblock On a generalisation of {M}antel's theorem to uniformly dense
  hypergraphs.
\newblock {\em Int. Math. Res. Not. IMRN}, (16):4899--4941, 2018.

\bibitem{RRSa}
V.~R\"{o}dl, A.~Ruci\'{n}ski, and E.~Szemer\'{e}di.
\newblock A {D}irac-type theorem for 3-uniform hypergraphs.
\newblock {\em Combin. Probab. Comput.}, 15(1-2):229--251, 2006.

\end{thebibliography}
\end{document}